\newtheorem{theorem}{Theorem}[section]
\newtheorem{lemma}[theorem]{Lemma}
\newtheorem{corollary}[theorem]{Corollary}
\newtheorem{proposition}[theorem]{Proposition}
\theoremstyle{definition}
\newtheorem{remark}[theorem]{Remark}
\newtheorem{definition}[theorem]{Definition}
\theoremstyle{remark}
\numberwithin{equation}{section}
\begin{document}

\title[Endpoint estimates for commutators]{Endpoint estimates for commutators with respect to the fractional integral operators on Orlicz-Morrey spaces}

\author{Naoya Hatano}

\address[Naoya Hatano]{Department of Department of Mathematics, Chuo University, 1-13-27, Kasuga, Bunkyo-ku, Tokyo 112-8551, Japan.}

\email[Naoya Hatano]{n.hatano.chuo@gmail.com}

\begin{abstract}
It is known that the necessary and sufficient conditions of the boundedness of commutators on Morrey spaces are given by Di Fazio, Ragusa and Shirai.
Moreover, according to the result of Cruz-Uribe and Fiorenza in 2003, it is given that the weak-type boundedness of the commutators of the fractional integral operators on the Orlicz spaces as the endpoint estimates.
In this paper, we gave the extention to the weak-type boundedness on the Orlicz-Morrey spaces.
\end{abstract}

\keywords{
Commutators,
fractional integral operators,
Orlicz-Morrey spaces.
}

\subjclass[2020]{Primary Primary 42B35; Secondary 42B25}
\maketitle

\section{Introduction}

Throughout in this paper, denote by $L^0({\mathbb R}^n)$ the set of all measurable functions on ${\mathbb R}^n$.
Let $0<\alpha<n$.
The fractional integral operator $I_\alpha$ is defined by
\[
I_\alpha f(x)
\equiv
\int_{{\mathbb R}^n}\frac{f(y)}{|x-y|^{n-\alpha}}\,{\rm d}y
\]
for $f\in L^0({\mathbb R}^n)$.
Since it is well known that $I_\alpha=C(-\Delta)^{-\alpha/2}$ for some constant $C>0$, the operator $I_\alpha$ is used in many fields of mathematics such as partial differential equations, potential theory, and so on.

In this paper, we investigate the commutator with respect to the fractional integral operator $[b,I_\alpha]\equiv bI_\alpha-I_\alpha(b\cdot)$ for $b\in L^0({\mathbb R}^n)$.
In the investigation of the commutators, we sometimes assume that $b\in L_{\rm loc}^1({\mathbb R}^n)$ belongs to the John-Nireberg space ${\rm BMO}({\mathbb R}^n)$.
The John-Nireberg space ${\rm BMO}({\mathbb R}^n)$ is defined by the space of all $b\in L_{\rm loc}^1({\mathbb R}^n)$ with the finite functional
\[
\|b\|_{\rm BMO}
\equiv
\sup_{Q\in{\mathcal Q}}
\frac1{|Q|}\int_Q|b(x)-b_Q|\,{\rm d}x,
\]
where for each $Q\in{\mathcal Q}({\mathbb R}^n)$,
\[
b_Q
\equiv
\frac1{|Q|}\int_Qb(x)\,{\rm d}x
\]
for $b\in L_{\rm loc}^1({\mathbb R}^n)$.

In 1982, Chanillo \cite{Chanillo82} proved the boundedness of $[b,I_\alpha]$ on the Lebesgue spaces, that is,
\[
\|[b,I_\alpha]f\|_{L^s}
\le C
\|b\|_{\rm BMO}\|f\|_{L^p},
\]
where $C>0$ is some positive constant independent of $b$ and $f$, and the condition of the exponents $p$ and $s$ is $1<p<s<\infty$ and $1/s=1/p-\alpha/n$.
Here, the necessary and sufficiently condition of the boundedness of $[b,I_\alpha]$ on the Lebesgue spaces is $b\in{\rm BMO}({\mathbb R}^n)$.
This result was extended to Morrey spaces by Di Fazio and Ragusa \cite{FaRa91} in 1991, and Shirai \cite{Shirai06-1} in 2006.
Afterwards, many authors investigated for some generalizations \cite{ArNa18,Nogayama19,Shirai06-2}.
On the other hand, as the end point estimates, Cruz-Uribe and Fiorenza \cite{CrFi03} gave the following modular weak-type inequality in 2003:
\[
|\{x\in{\mathbb R}^n\,:\,|[b,I_\alpha]f(x)|>1\}|
\le C
\Psi\left(
\int_{{\mathbb R}^n}\Phi(\|b\|_{\rm BMO}|f(x)|)\,{\rm d}x
\right),
\]
where we put
\[
\Phi(t)\equiv t\log(3+t), \quad
\Psi(t)\equiv\left(t\log\left(3+t^{\frac\alpha n}\right)\right)^{\frac n{n-\alpha}},
\]
and $C>0$ is some positive constant independent of $b$ and $f$.
Moreover, it is known that weak-$(1,n/(n-\alpha))$ boundedness of $[b,I_\alpha]$ does not hold (see \cite[p.\ 104]{CrFi03}).
Replacing $f\mapsto f/(\lambda t)$ for any $\lambda,t>0$, and using the subadditivity of $\Phi$ and $\Psi$, we have the norm estimate
\[
\|[b,I_\alpha]f\|_{{\rm W}L^{\Psi_1}}
\le C
\|f\|_{L^\Phi}
\]
as the weak-type boundedness of $[b,I_\alpha]$ for some Young function $\Psi_1$ and a constant $C>0$, easily.
Here, the Young function $\Psi_1$ is given in Lemma \ref{lem:elemental} to follow.

Here, we extended the end point estimates to the weak-type boundedness on Orlicz-Morrey spaces.
The set of all cubes whose edges are parallel to the coordinate axes is denoted by ${\mathcal Q}({\mathbb R}^n)$.
At first, we introduce the definitions to state the main theorem.

\begin{definition}
Let $0<q\le p<\infty$.
The Morrey space ${\mathcal M}^p_q({\mathbb R}^n)$ and its weak-type space ${\rm W}{\mathcal M}^p_q({\mathbb R}^n)$ are defined by the space of all $f\in L^0({\mathbb R}^n)$ with the finite norm
\[
\|f\|_{{\mathcal M}^p_q}
\equiv
\sup_{Q\in{\mathcal Q}}|Q|^{\frac1p-\frac1q}
\left(\int_Q|f(x)|^q\,{\rm d}x\right)^{\frac1q}
\]
and
\[
\|f\|_{{\rm W}{\mathcal M}^p_q}
\equiv
\sup_{t>0}
\left\|t\chi_{\{x\in{\mathbb R}^n\,:\,|f(x)|>t\}}\right\|_{{\mathcal M}^p_q},
\]
respectively.
\end{definition}

To mention the main theorem, we use the Orlicz-Morrey spaces of the second kind introduced by \cite{SST12} (see also \cite{HaSa21}).

\begin{definition}\label{def:Mp-LlogL}
Let $1<p<\infty$.
\begin{itemize}
\item[{\rm (1)}] For each $Q\in{\mathcal Q}({\mathbb R}^n)$, define
\[
\|f\|_{L\log L,Q}
\equiv
\inf\left\{
\lambda>0
\,:\,
\frac1{|Q|}\int_Q\frac{|f(x)|}\lambda\log\left(3+\frac{|f(x)|}\lambda\right)\,{\rm d}x
\le1
\right\}.
\]

\item[{\rm (2)}] The Orlicz-Morrey space ${\mathcal M}^p_{L\log L}({\mathbb R}^n)$ is defined by the space of all $f\in L^0({\mathbb R}^n)$ with the finite norm
\[
\|f\|_{{\mathcal M}^p_{L\log L}}
\equiv
\sup_{Q\in{\mathcal Q}}|Q|^{\frac1p}
\|f\|_{L\log L,Q}.
\]
\end{itemize}
\end{definition}

We obtain the main theorem as follows.

\begin{theorem}\label{thm:main}
Let $0<\alpha<n$, $1<p<\infty$ and $1<t<s<\infty$.
Assume that
\[
\frac1s=\frac1p-\frac\alpha n,
\quad
\frac1p=\frac ts.
\]
Then $b\in{\rm BMO}({\mathbb R}^n)$ if and only if for all $f\in{\mathcal M}^p_{L\log L}({\mathbb R}^n)$, $[b,I_\alpha]f$ is well defined, and there exists $C>0$ independent of $b$ and $f$ such that
\[
\|[b,I_\alpha]f\|_{{\rm W}{\mathcal M}^s_t}
\le C
\|f\|_{{\mathcal M}^p_{L\log L}}
\]
hold.
\end{theorem}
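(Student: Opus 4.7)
The plan is to prove the two implications separately. For the sufficiency (BMO norm controls the commutator), I would follow the standard Morrey localization scheme: fix an arbitrary cube $Q\in{\mathcal Q}({\mathbb R}^n)$, split $f=f\chi_{2Q}+f\chi_{(2Q)^c}=:f_{\rm in}+f_{\rm out}$, and use the algebraic identity
\[
[b,I_\alpha]f(x)=(b(x)-b_{2Q})I_\alpha f(x)-I_\alpha((b-b_{2Q})f)(x)
\]
so that for $x\in Q$ the commutator decomposes into four pieces, two of which involve $f_{\rm in}$ and two involve $f_{\rm out}$. The weak $\mathcal{M}^s_t$ quasinorm is controlled cube by cube, so I would bound each of the four pieces on $Q$ and then take the supremum over $Q$.

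For the local (inner) pieces I would apply the Cruz-Uribe--Fiorenza endpoint estimate recalled in the introduction, namely $\|[b,I_\alpha]g\|_{{\rm W}L^{\Psi_1}}\le C\|g\|_{L^\Phi}$ together with its companion $\|I_\alpha g\|_{{\rm W}L^{\Psi_1}}\le C\|g\|_{L^\Phi}$ (the Orlicz analogue of weak Hardy--Littlewood--Sobolev). Applied to $g=(b-b_{2Q})f\chi_{2Q}$ and to $g=f\chi_{2Q}$ (combined with multiplication by $b-b_{2Q}$ via a generalized H\"older inequality in the Orlicz scale $L^\Phi$-$L^{{\rm exp}L}$), these estimates yield a bound of the form $|Q|^{1/p}\|f\|_{L\log L,2Q}$, which is exactly what is needed to produce an $\mathcal{M}^p_{L\log L}$ factor. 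The hypothesis $1/s=1/p-\alpha/n$ and $1/p=t/s$ tracks the scaling correctly in this step.

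For the non-local (outer) pieces I would use the standard pointwise bound
\[
|I_\alpha(g\chi_{(2Q)^c})(x)|
\le C\sum_{k=1}^{\infty}|2^kQ|^{\alpha/n-1}\int_{2^{k+1}Q}|g(y)|\,{\rm d}y,
\qquad x\in Q,
\]
applied with $g=f$ and $g=(b-b_{2Q})f$. In each of these dyadic annular integrals I would replace $b_{2Q}$ by $b_{2^{k+1}Q}$ at the cost of a harmless $k\|b\|_{\rm BMO}$ factor (this is the usual John--Nirenberg telescoping), and then absorb the oscillation of $b$ against $f$ via H\"older-type inequalities in the $L^\Phi$-$L^{{\rm exp}L}$ duality on the cube $2^{k+1}Q$. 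Summing the resulting geometric-type series in $k$ (which converges because $\alpha/n-1+$ the scaling exponent attached to $\|f\|_{\mathcal{M}^p_{L\log L}}$ is strictly negative, thanks to $\alpha<n/p$) yields a uniform bound of the right form. The main obstacle of the whole argument is precisely this outer estimate: one has to be careful that the $k$-dependence coming from $|b_{2^{k+1}Q}-b_{2Q}|\lesssim k\|b\|_{\rm BMO}$ does not destroy convergence of the series, and that the localized Luxemburg norm $\|f\|_{L\log L,2^{k+1}Q}$ is controlled by $|2^{k+1}Q|^{-1/p}\|f\|_{\mathcal{M}^p_{L\log L}}$; both are standard but must be checked in the Orlicz setting.

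For the necessity ($b\in{\rm BMO}$ follows from the weak-type boundedness) I would use the by-now classical testing argument in the spirit of Janson and Chanillo. Given a cube $Q$, write $\mathrm{sgn}(b(x)-b_Q)$ via a translate-and-invert trick so that on $Q$ the commutator $[b,I_\alpha]\chi_{Q_0}$ dominates $|b(x)-b_Q|$ pointwise up to constants (using that on a suitable cube $Q_0$ disjoint from but comparable to $Q$ the kernel $|x-y|^{\alpha-n}$ is essentially constant). Applying the assumed weak ${\mathcal M}^s_t$-${\mathcal M}^p_{L\log L}$ inequality to $f=\chi_{Q_0}$ and specializing to the level set $\{|b-b_Q|>\lambda\}\cap Q$ then produces, after taking the supremum over $\lambda>0$, the BMO bound
\[
\frac1{|Q|}\int_Q|b(x)-b_Q|\,{\rm d}x\le C,
\]
uniformly in $Q$. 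This part is formal once the weak Morrey inequality is available, since the scaling $1/s=1/p-\alpha/n$, $1/p=t/s$ is tailored to reproduce $|Q|$ on both sides when testing on indicator functions.
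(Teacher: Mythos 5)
Your plan for the necessity direction (the ``if'' part, that the weak-type bound forces $b\in{\rm BMO}$) follows the Janson Fourier-series/testing argument, which is indeed what the paper does, and your treatment of the non-local piece $f_{\rm out}$ via annular decomposition, $L\log L$--$\exp L$ duality on each annulus, and the John--Nirenberg telescoping $|b_{2^{k+1}Q}-b_{2Q}|\lesssim k\|b\|_{\rm BMO}$ matches the paper's estimates for $F_1$ and $F_2$ in Section \ref{s:modified}. However, there is a genuine gap in your treatment of the local piece $f_{\rm in}=f\chi_{2Q}$. You propose to invoke the Cruz-Uribe--Fiorenza endpoint bound $\|[b,I_\alpha]g\|_{{\rm W}L^{\Psi_1}}\lesssim\|g\|_{L^\Phi}$ applied to $g=f\chi_{2Q}$ (or $(b-b_{2Q})f\chi_{2Q}$) and then localize this weak Orlicz estimate to the weak $L^t$ bound on $Q$ that $\|\cdot\|_{{\rm W}{\mathcal M}^s_t}=\sup_Q|Q|^{1/s-1/t}\|\cdot\chi_Q\|_{{\rm W}L^t}$ requires. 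This cannot work: $\Psi_1$ grows like $u^{n/(n-\alpha)}$ at infinity (up to logarithmic corrections), so the CU-F bound yields only the tail decay $|\{|[b,I_\alpha]g|>\lambda\}|\lesssim\lambda^{-n/(n-\alpha)}$, whereas $\sup_\lambda\lambda|\{x\in Q:|[b,I_\alpha]g|>\lambda\}|^{1/t}$ needs decay $\lambda^{-t}$. Since $t=n/(n-\alpha p)>n/(n-\alpha)$ whenever $p>1$, the supremum over $\lambda$ diverges with the information you have. In short, the CU-F estimate lives at the $p=1$ endpoint and carries no trace of the extra integrability $p>1$ that $\mathcal M^p_{L\log L}$ encodes; it is therefore incapable by itself of producing the ${\rm W}{\mathcal M}^s_t$ bound, and your remark that the hypotheses ``track the scaling correctly in this step'' is not borne out.

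The paper handles the inner piece by an entirely different mechanism, which is the main technical content of the argument and is absent from your sketch. The CU-F estimate, combined with Lemma \ref{lem:elemental} and the ${\rm W}L^{\Psi_1}$-boundedness of $M$ (Theorem \ref{thm:M-wLphi-bdd}), is used \emph{only} to conclude $M(|b,I_\alpha|f_1)\in{\rm W}L^{\Psi_1}({\mathbb R}^n)$, which via Lemma \ref{lem:median0} forces the median condition $m_{|b,I_\alpha|f_1}(2^lQ)\to 0$. This licenses the sharp-maximal equivalence $\||b,I_\alpha|f_1\|_{{\rm W}{\mathcal M}^s_t}\sim\|M^{\sharp,d}_\lambda(|b,I_\alpha|f_1)\|_{{\rm W}{\mathcal M}^s_t}$ of Corollary \ref{cor:sharp-maximal}, which rests on the sparse-family Lerner-type estimates of Section \ref{ss:sharp}. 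The pointwise sharp-maximal bound of Theorem \ref{thm:sharp-pointwise}, $(|b,I_\alpha|f)^\sharp\lesssim\|b\|_{\rm BMO}(M((I_\alpha f)^\eta)^{1/\eta}+M_{\alpha,L\log L}f)$, then reduces everything to the boundedness of $M$ on ${\rm W}{\mathcal M}^s_t$ (Proposition \ref{prop:wMpq-bdd}) and, crucially, to the new endpoint estimate $\|M_{\alpha,L\log L}f\|_{{\rm W}{\mathcal M}^s_t}\lesssim\|f\|_{{\mathcal M}^p_{L\log L}}$ of Theorem \ref{thm:220312-H1} — it is precisely this last step that correctly encodes the $p$-dependent scaling that your direct appeal to the CU-F estimate misses. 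To repair your argument you would need to supply this entire sharp-maximal-plus-Orlicz-fractional-maximal layer, not merely the outer-piece estimate and the Janson testing argument.
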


There are some related results using the Orlicz-Morrey spaces as follows:
Shi, Arai and Nakai gave the boundedness of commutators on Orlicz-Morrey spaces of the first kind \cite{SAN21}.
Moreover, boundedness of its weak-type spaces is given by Kawasumi in \cite{Kawasumi23}.
Deringoz, Guliyev, Samko and Hasanov established the boundedness from Orlicz-Morrey spaces of the third kind to other Orlicz-Morrey spaces of the third kind \cite{DGH16,DGS15}.
Gogatishvili, Mustafayev and Ag\v cayazi considered the maximal commutator which corresponds to the case where $\alpha=0$ \cite{GMA18}.
Hakim and Sawano investigated the boundedness of $I_\alpha$ from the Calder\'{o}n-Lozanovski\v{\i} product to ${\mathcal M}^s_t({\mathbb R}^n)$ \cite{HaSa21}.

We will use standard notation for inequalities.
We use $C$ to denote a positive constant that may vary from one occurrence to another.
If $f\le Cg$, then we write $f\lesssim g$ or $g\gtrsim f$, and if $f\lesssim g\lesssim f$, then we write $f\sim g$.

We organize the remaining part of the paper as follows:
In Section \ref{s:statements}, we provide some known results for each function spaces.
In Section, \ref{ss:sharp}, we introduce the sparse theory on weak Morrey spaces to give some pointwise estimate for the sharp maximal functions of the commutators.
In Section \ref{s:Orlicz-fractional}, we provide some endpoint estimate for the Orlicz fractional maximal operators on the Orlicz-Morrey spaces with respect to general Young functions.
In Section \ref{s:modified}, we show the endpoint estimate for the modified commutators with positive kernel on Orlicz-Morrey spaces.
In Section \ref{s:proof}, we give the proof of the main theorem.

\section{Known statements for each function spaces}\label{s:statements}

In this section, we provide some known results for each function spaces to prove the main theorem.

\subsection{weak Lebesgue and Morrey spaces}

\begin{definition}
Let $0<p<\infty$.
The weak Lebesgue space ${\rm W}L^p({\mathbb R}^n)$ is defined by the space of all $f\in L^0({\mathbb R}^n)$ with the finite quasi-norm
\[
\|f\|_{{\rm W}L^p}
\equiv
\sup_{t>0}t|\{x\in{\mathbb R}^n\,:\,|f(x)|>t\}|^{\frac1p}.
\]
\end{definition}

It is well known that the triangle inequality of $\|\cdot\|_{{\rm W}L^p}$ holds as follows:
\[
\|f+g\|_{{\rm W}L^p}
\le
\max\left(2,2^{\frac1p}\right)
\left(\|f\|_{{\rm W}L^p}+\|g\|_{{\rm W}L^p}\right).
\]
Moreover, when $p>1$, $\|\cdot\|_{{\rm W}L^p}$ is normable.

\begin{proposition}[{\cite[Exercise 1.1.12]{Grafakos14}}]
\label{prop:normable-wLp}
Let $0<p_0<p<\infty$.
Then
\[
\|f\|_{{\rm W}L^p}
\sim
\sup_{0<|E|<\infty}|E|^{\frac1p-\frac1{p_0}}\|f\chi_E\|_{L^{p_0}}
\]
for $f\in{\rm W}L^p({\mathbb R}^n)$, where the supremum is taken over all measurable sets $E$ with $0<|E|<\infty$.
\end{proposition}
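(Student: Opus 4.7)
The plan is to establish both inequalities by direct computation with the distribution function. Write $A \equiv \|f\|_{{\rm W}L^p}$ and $M \equiv \sup_{0<|E|<\infty} |E|^{1/p-1/p_0} \|f\chi_E\|_{L^{p_0}}$; I would show $M \lesssim A$ by a layer-cake splitting argument and $A \lesssim M$ by a Chebyshev-type test on the level sets of $f$.

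For the direction $M \lesssim A$, I would fix a set $E$ with $0 < |E| < \infty$ and apply the layer-cake formula to obtain
\[
\|f\chi_E\|_{L^{p_0}}^{p_0} = p_0 \int_0^\infty t^{p_0-1} |E \cap \{|f|>t\}|\,{\rm d}t,
\]
then bound the distribution function by $\min(|E|,\,A^p t^{-p})$. Splitting the $t$-integral at the critical threshold $\lambda \equiv A |E|^{-1/p}$ where the two upper bounds coincide, the first piece contributes $|E|\lambda^{p_0} = A^{p_0}|E|^{1-p_0/p}$, and the tail piece, convergent precisely because $p_0<p$, contributes $\frac{p_0}{p-p_0}A^p \lambda^{p_0-p} = \frac{p_0}{p-p_0}A^{p_0}|E|^{1-p_0/p}$. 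Summing and taking $p_0$-th roots yields $|E|^{1/p-1/p_0}\|f\chi_E\|_{L^{p_0}} \le (p/(p-p_0))^{1/p_0} A$, hence $M \lesssim A$.

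For the reverse direction, I would test $M$ on truncations of the super-level set $E_t \equiv \{|f|>t\}$. On any measurable $F \subseteq E_t$ with $0<|F|<\infty$, the pointwise bound $|f| \ge t$ on $F$ together with the defining hypothesis of $M$ gives
\[
t|F|^{1/p_0} \le \|f\chi_F\|_{L^{p_0}} \le M |F|^{1/p_0 - 1/p},
\]
so that $t|F|^{1/p} \le M$. Choosing $F = E_t \cap B(0,R)$ and letting $R \to \infty$, monotone convergence delivers $t|E_t|^{1/p}\le M$ (and simultaneously rules out $|E_t|=\infty$ when $M<\infty$); taking the supremum over $t>0$ gives $A \le M$. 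There is no substantive obstacle here — the argument is essentially bookkeeping with the distribution function — but two places use the hypothesis $0<p_0<p$ in an essential way: the convergence of the tail integral $\int_\lambda^\infty t^{p_0-1-p}\,{\rm d}t$ in the first direction, and the positivity of the exponent $1/p_0-1/p$ needed to conclude from the Chebyshev inequality in the second. The only mild technical point is the truncation $E_t \cap B(0,R)$ introduced to accommodate level sets that are a priori of infinite measure.
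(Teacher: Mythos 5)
Your proof is correct, and it is essentially the standard argument; the paper itself gives no proof, citing the result directly to Grafakos (Exercise~1.1.12), and your layer-cake splitting for $M\lesssim A$ combined with the Chebyshev-type test on super-level sets for $A\lesssim M$ is precisely the argument that exercise intends. One small remark: since the proposition already assumes $f\in{\rm W}L^p({\mathbb R}^n)$, the level sets $\{|f|>t\}$ automatically have finite measure, so the truncation by $B(0,R)$ is not strictly needed under the stated hypothesis — though it does make your argument prove the slightly stronger fact that finiteness of the right-hand side already forces $f\in{\rm W}L^p$, which is a useful byproduct.
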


Additoinally, we have the boundedness and the Fefferman-Stein type vector-valued ineqality for the Hardy-Littlewood maximal operaor on the weak Lebesgue spaces is given to follow.

\begin{definition}
For $f\in L_{\rm loc}^1({\mathbb R}^n)$, define
\[
Mf(x)
\equiv
\sup_{Q\in{\mathcal Q}}\frac{\chi_Q(x)}{|Q|}\int_Q|f(y)|\,{\rm d}y,
\quad x\in{\mathbb R}^n.
\]
\end{definition}

\begin{proposition}[{\cite[Theorem 3.7]{CGMP06} and \cite[Theorem 6.1]{HKS22}}]
\label{prop:FS-WLp}
Let $1<p<\infty$ and $1<q\le\infty$.
Then the following assertions hold{\rm :}
\begin{itemize}
\item[{\rm (1)}] For all $f\in{\rm W}L^p({\mathbb R}^n)$,
\[
\|Mf\|_{{\rm W}L^p}
\lesssim
\|f\|_{{\rm W}L^p}.
\]
\item[{\rm (2)}]
For all $\{f_j\}_{j=1}^\infty\subset{\rm W}L^p({\mathbb R}^n)$,
\[
\left\|\left(\sum_{j=1}^\infty(Mf)^q\right)^{\frac1q}\right\|_{{\rm W}L^p}
\lesssim
\left\|\left(\sum_{j=1}^\infty|f|^q\right)^{\frac1q}\right\|_{{\rm W}L^p},
\]
where the case $q=\infty$ requires a natural modification.
\end{itemize}
\end{proposition}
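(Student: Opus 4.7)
The plan is to deduce both parts by real interpolation from the classical strong- and weak-type bounds for $M$, using the fact that the weak Lebesgue spaces arise as real interpolation spaces between the ordinary Lebesgue spaces.

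For part (1), the scalar assertion, I would combine the classical weak-type $(1,1)$ bound $M:L^1\to{\rm W}L^1$ with the trivial $M:L^\infty\to L^\infty$ bound. By the real interpolation identity ${\rm W}L^p=(L^1,L^\infty)_{1-1/p,\infty}$ together with the interpolation property for sublinear operators, these endpoint bounds immediately yield $M:{\rm W}L^p\to{\rm W}L^p$ for $1<p<\infty$. As a hands-on alternative, one may argue directly with distribution functions: for $f\in{\rm W}L^p$ and $\alpha>0$, split $f=f_1+f_2$ with $f_2\equiv f\chi_{\{|f|>\alpha/2\}}$, use the trivial bound $Mf_1\le\alpha/2$ to get $\{Mf>\alpha\}\subset\{Mf_2>\alpha/2\}$, and then estimate $|\{Mf_2>\alpha/2\}|\le C\alpha^{-1}\|f_2\|_{L^1}\lesssim\alpha^{-p}\|f\|_{{\rm W}L^p}^p$ via the weak-type $(1,1)$ bound and the layer-cake identity applied to the tail $\{|f|>\alpha/2\}$.

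For part (2) with $q=\infty$, the pointwise comparison $\sup_jMf_j(x)\le M(\sup_j|f_j|)(x)$ reduces the assertion to (1). For $1<q<\infty$, I would view $T:(f_j)_j\mapsto(Mf_j)_j$ as a sublinear operator acting on $\ell^q$-valued functions. This $T$ satisfies the classical Fefferman-Stein strong-type estimate $T:L^r(\ell^q)\to L^r(\ell^q)$ for every $1<r<\infty$, together with the vector-valued weak-type $(1,1)$ endpoint $T:L^1(\ell^q)\to{\rm W}L^1(\ell^q)$, the latter being the original Fefferman-Stein theorem, whose proof rests on a Calder\'on-Zygmund decomposition carried out jointly over the index $j$. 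Invoking real interpolation between these endpoints, now for a Banach-space-valued target, yields the claimed weak-$L^p(\ell^q)$ mapping property of $T$, which is exactly assertion (2).

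The principal obstacle is the vector-valued weak-type $(1,1)$ endpoint for $T$, which does not follow formally from the scalar $M:L^1\to{\rm W}L^1$ bound and requires the joint Calder\'on-Zygmund decomposition of $(|f_j|)_j$ as an $\ell^q$-valued function. This is the substantive content of \cite[Theorem 6.1]{HKS22}; once it is in hand, all remaining steps reduce to bookkeeping via real interpolation, which preserves sublinearity and treats the $\ell^q$-valued setting on the same footing as the scalar one.
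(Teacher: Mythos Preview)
The paper does not supply a proof of this proposition at all: it is recorded as a known result, with the two parts attributed to \cite[Theorem~3.7]{CGMP06} and \cite[Theorem~6.1]{HKS22}, and is then used as a black box in Section~\ref{ss:sharp}. There is therefore no ``paper's own proof'' to compare your sketch against.

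That said, your sketch is mathematically sound. The splitting argument you give for part~(1) is the standard one and is complete as written. For part~(2), your strategy of interpolating between the Fefferman--Stein vector-valued weak-type $(1,1)$ endpoint and a strong-type $L^r(\ell^q)$ bound is correct; the one subtlety is that you need the real interpolation identity $(L^{1,\infty}(\ell^q),L^r(\ell^q))_{\theta,\infty}=L^{p,\infty}(\ell^q)$ on the target side, which holds but deserves a citation. One small misattribution: the vector-valued weak-type $(1,1)$ bound is not ``the substantive content of \cite[Theorem~6.1]{HKS22}''---that is Fefferman and Stein's original 1971 theorem. The cited result in \cite{HKS22} is more likely the ${\rm W}L^p$ vector-valued inequality itself, i.e.\ exactly assertion~(2), proved there by different means. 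A more self-contained route, avoiding interpolation theory for Banach-valued spaces, is to repeat the level-set splitting from your part~(1) argument with $|f|$ replaced by $\bigl(\sum_j|f_j|^q\bigr)^{1/q}$ and invoke the Fefferman--Stein weak-type $(1,1)$ bound in place of the scalar one.
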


As Morrey spaces extend the Lebesgue spaces, weak Morrey spaces are also extensions to the weak Lebesgue spaces: ${\rm W}{\mathcal M}^p_p({\mathbb R}^n)={\rm W}L^p({\mathbb R}^n)$.
Moreover, it is easy to see that the weak Morrey quasi-norm $\|\cdot\|_{{\rm W}{\mathcal M}^p_q}$, $0<q\le p<\infty$, has another expression:
\[
\|f\|_{{\rm W}{\mathcal M}^p_q}
=
\sup_{Q\in{\mathcal Q}}
|Q|^{\frac1p-\frac1q}\|f\chi_Q\|_{{\rm W}L^q}.
\]
The quasi-norm $\|\cdot\|_{{\rm W}{\mathcal M}^p_q}$ of the indicator functions over cubes can be calculated by
\[
\|\chi_Q\|_{{\mathcal M}^p_q}
=
|Q|^{\frac1p},
\]
clearly, as follows:

\begin{proposition}\label{thm:chiQwMpq}
Let $0<q\le p<\infty$, and let $Q\in{\mathcal Q}({\mathbb R}^n)$.
Then
\[
\|\chi_Q\|_{{\rm W}{\mathcal M}^p_q}
=
|Q|^{\frac1p}.
\]
\end{proposition}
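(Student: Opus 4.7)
The plan is to use the alternative expression of the weak Morrey quasi-norm recorded immediately before the statement,
\[
\|\chi_Q\|_{{\rm W}{\mathcal M}^p_q}
=\sup_{R\in{\mathcal Q}}|R|^{\frac1p-\frac1q}\|\chi_Q\chi_R\|_{{\rm W}L^q}
=\sup_{R\in{\mathcal Q}}|R|^{\frac1p-\frac1q}\|\chi_{Q\cap R}\|_{{\rm W}L^q},
\]
reduce the weak-Lebesgue quasi-norm of a characteristic function to a measure, and then evaluate the resulting supremum over cubes $R$.

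First I would observe that for any measurable set $E\subset{\mathbb R}^n$ the distribution function of $\chi_E$ equals $|E|$ for $0<t<1$ and $0$ for $t\ge1$, so directly from the definition of $\|\cdot\|_{{\rm W}L^q}$ one obtains $\|\chi_E\|_{{\rm W}L^q}=|E|^{\frac1q}$. Applying this with $E=Q\cap R$ reduces the claim to showing
\[
\sup_{R\in{\mathcal Q}}|R|^{\frac1p-\frac1q}|Q\cap R|^{\frac1q}=|Q|^{\frac1p}.
\]

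For the lower bound I would simply plug in $R=Q$, which contributes $|Q|^{\frac1p-\frac1q}|Q|^{\frac1q}=|Q|^{\frac1p}$. For the upper bound I would use that $\frac1p-\frac1q\le0$ together with $|Q\cap R|\le|R|$, which gives $|R|^{\frac1p-\frac1q}\le|Q\cap R|^{\frac1p-\frac1q}$ (in the degenerate case $|Q\cap R|=0$ the product is $0$ and there is nothing to prove), and hence
\[
|R|^{\frac1p-\frac1q}|Q\cap R|^{\frac1q}\le|Q\cap R|^{\frac1p}\le|Q|^{\frac1p}.
\]
Taking the supremum over $R\in{\mathcal Q}({\mathbb R}^n)$ yields the matching upper bound.

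I do not expect any genuine obstacle: the argument is purely a bookkeeping exercise combining the identity $\|\chi_E\|_{{\rm W}L^q}=|E|^{\frac1q}$ with the monotonicity of $t\mapsto t^{\frac1p-\frac1q}$ in the range $0<q\le p<\infty$. The only point worth flagging is the convention on the degenerate case $|Q\cap R|=0$, which is harmless since both sides of the intermediate inequality vanish.
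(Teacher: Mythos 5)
Your proof is correct, and it is essentially the argument the paper has in mind: the paper states this proposition without a displayed proof, calling it ``clear'' immediately after recording the alternative expression $\|f\|_{{\rm W}{\mathcal M}^p_q}=\sup_{R\in{\mathcal Q}}|R|^{\frac1p-\frac1q}\|f\chi_R\|_{{\rm W}L^q}$ and the strong-space analogue $\|\chi_Q\|_{{\mathcal M}^p_q}=|Q|^{\frac1p}$, which is precisely the reduction you carry out. Your computation $\|\chi_E\|_{{\rm W}L^q}=|E|^{1/q}$, the lower bound via $R=Q$, and the upper bound via monotonicity of $t\mapsto t^{\frac1p-\frac1q}$ for $\frac1p-\frac1q\le0$ are all correct, and you correctly note the degenerate case $|Q\cap R|=0$.
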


It is known that $M$ is a bounded operator on ${\rm W}{\mathcal M}^p_q({\mathbb R}^n)$, too.

\begin{proposition}[{\cite[Theorem 3.1]{KaNa21}}]\label{prop:wMpq-bdd}
Let $1<q\le p<\infty$.
Then
\[
\|Mf\|_{{\rm W}{\mathcal M}^p_q}
\lesssim
\|f\|_{{\rm W}{\mathcal M}^p_q}
\]
for all $f\in{\rm W}{\mathcal M}^p_q({\mathbb R}^n)$.
\end{proposition}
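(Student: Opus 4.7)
The plan is the classical local/far decomposition adapted to weak Morrey spaces. Fix an arbitrary cube $Q\in{\mathcal Q}({\mathbb R}^n)$, write $f=f_1+f_2$ with $f_1\equiv f\chi_{2Q}$ and $f_2\equiv f\chi_{(2Q)^c}$, and use $Mf\le Mf_1+Mf_2$ together with the quasi-triangle inequality for $\|\cdot\|_{{\rm W}L^q}$. The task then reduces to bounding each of $|Q|^{\frac1p-\frac1q}\|(Mf_i)\chi_Q\|_{{\rm W}L^q}$ for $i=1,2$ by a constant multiple of $\|f\|_{{\rm W}{\mathcal M}^p_q}$ with a constant independent of $Q$; taking the supremum over $Q$ then yields the claim through the cube expression of $\|\cdot\|_{{\rm W}{\mathcal M}^p_q}$ recalled before Proposition \ref{thm:chiQwMpq}.

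For $f_1$, since $q>1$, I would invoke the weak-$L^q$ boundedness of $M$ from Proposition \ref{prop:FS-WLp}(1) to obtain
\[
\|(Mf_1)\chi_Q\|_{{\rm W}L^q}
\le
\|Mf_1\|_{{\rm W}L^q}
\lesssim
\|f\chi_{2Q}\|_{{\rm W}L^q}
\lesssim
|Q|^{\frac1q-\frac1p}\|f\|_{{\rm W}{\mathcal M}^p_q},
\]
where the last step applies the weak Morrey quasi-norm to the cube $2Q$ and absorbs the constant $|2Q|^{\frac1q-\frac1p}\sim|Q|^{\frac1q-\frac1p}$. The prefactor $|Q|^{\frac1p-\frac1q}$ cancels, closing the local estimate.

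For $f_2$, I would first establish the uniform pointwise bound $Mf_2(x)\lesssim|Q|^{-\frac1p}\|f\|_{{\rm W}{\mathcal M}^p_q}$ for $x\in Q$. Any cube $R$ with $R\ni x$ and $R\cap(2Q)^c\neq\emptyset$ satisfies $|R|\gtrsim|Q|$. Applying Proposition \ref{prop:normable-wLp} with $p_0=1$ (a Kolmogorov-type inequality, valid precisely because $q>1$) to $f\chi_R$ yields
\[
\frac1{|R|}\int_R|f(y)|\,{\rm d}y
\le
|R|^{-\frac1q}\|f\chi_R\|_{{\rm W}L^q}
\le
|R|^{-\frac1p}\|f\|_{{\rm W}{\mathcal M}^p_q}
\lesssim
|Q|^{-\frac1p}\|f\|_{{\rm W}{\mathcal M}^p_q},
\]
using $p<\infty$ at the final step. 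Taking the supremum over admissible $R$ proves the pointwise bound, and then $\|(Mf_2)\chi_Q\|_{{\rm W}L^q}\le\|Mf_2\|_{L^\infty(Q)}|Q|^{\frac1q}$ combined with the factor $|Q|^{\frac1p-\frac1q}$ finishes the estimate. The only technical ingredient beyond the standard Morrey argument is the Kolmogorov-type inequality just used, which is immediate from Proposition \ref{prop:normable-wLp}; bookkeeping of the quasi-triangle constants for $\|\cdot\|_{{\rm W}L^q}$ is harmless, so I expect no real obstacle.
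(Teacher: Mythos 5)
The paper states Proposition~\ref{prop:wMpq-bdd} as a citation to \cite[Theorem 3.1]{KaNa21} and does not reproduce a proof, so there is no in-paper argument to compare against. Your proof is correct and self-contained using only ingredients already recorded in the paper. The local/far decomposition $f=f\chi_{2Q}+f\chi_{(2Q)^c}$ together with the weak-$L^q$ boundedness of $M$ (Proposition~\ref{prop:FS-WLp}(1), which needs $q>1$) handles the local piece, and the Kolmogorov-type inequality (Proposition~\ref{prop:normable-wLp} with $p_0=1$, again needing $q>1$) gives the pointwise bound $Mf_2\lesssim|Q|^{-1/p}\|f\|_{{\rm W}{\mathcal M}^p_q}$ on $Q$, since any cube $R\ni x\in Q$ meeting $(2Q)^c$ has $|R|\gtrsim|Q|$. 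The bookkeeping of $|2Q|\sim|Q|$ and the quasi-triangle constant for $\|\cdot\|_{{\rm W}L^q}$ (normable since $q>1$) is indeed harmless, and $\|\chi_Q\|_{{\rm W}L^q}=|Q|^{1/q}$ closes the far estimate. By contrast, the cited source \cite{KaNa21} establishes boundedness in the more general weighted Orlicz--Morrey and weak Orlicz--Morrey framework; your direct argument for the unweighted weak Morrey scale is more elementary, and it has the advantage of relying only on Propositions~\ref{prop:normable-wLp} and~\ref{prop:FS-WLp}, both of which the paper already quotes, so it would make the paper more self-contained if included.
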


\subsection{Orlicz and weak Orlicz spaces}

In this section, we recall the definition of Orlicz, weak Orlicz and Orlicz-Morrey spaces, and give some boundedness properties of the Orlicz fractional maximal operators.

\begin{definition}[Young function]
A function $\Phi:[0,\infty)\to[0,\infty)$ is called a Young function, if it satisfies the following properties:
\begin{itemize}
\item[(1)] $\Phi(t)>0$ for all $t>0$.
\item[(2)] $\lim\limits_{t\to0^+}\Phi(t)=0$.
\item[(3)] $\Phi$ is convex.
\end{itemize}
\end{definition}

We remark that any Young function is continuous and bijective on $[0,\infty)$.
So, we can consider the inverse function of Young functions, simply.

\begin{definition}
Let $\Phi$ be a Young function.
\begin{itemize}
\item[{\rm (1)}] The Orlicz space $L^\Phi({\mathbb R}^n)$ is defined to be the space of all $f\in L^0({\mathbb R}^n)$ with the finite quasi-norm
\[
\|f\|_{L^\Phi}
\equiv
\inf\left\{
\lambda>0
\,:\,
\int_{{\mathbb R}^n}\Phi\left(\frac{|f(x)|}\lambda\right)\,{\rm d}x
\le1
\right\}.
\]

\item[{\rm (2)}] The weak Orlicz space ${\rm W}L^\Phi({\mathbb R}^n)$ is defined by the space of all $f\in L^0({\mathbb R}^n)$ with the finite quasi-norm
\[
\|f\|_{{\rm W}L^\Phi}
\equiv
\sup_{t>0}\left\|t\chi_{\{x\in{\mathbb R}^n\,:\,|f(x)|>t\}}\right\|_{L^\Phi}.
\]
\end{itemize}
\end{definition}

\begin{remark}
The author give a representation for the quasi-norm of the weak Orlicz spaces by the modular with Kawasumi and Ono in \cite[Lemma 2.4]{HKO23} as follows:
\[
\|f\|_{{\rm W}L^\Phi}
=
\inf\left\{
\lambda>0
\,:\,
\sup_{t>0}\Phi(t)\left|\left\{x\in{\mathbb R}^n\,:\,\frac{|f(x)|}\lambda>t\right\}\right|
\le1
\right\}.
\]
\end{remark}

\begin{lemma}[{cf. \cite[Example 45]{SDH20}}]\label{lem:Orlicz}
Let $\Phi$ be a Young funciton, and let $E$ be a measurable set.
Then
\[
\|\chi_E\|_{{\rm W}L^\Phi}
=
\left\{\Phi^{-1}\left(\frac1{|E|}\right)\right\}^{-1}.
\]
\end{lemma}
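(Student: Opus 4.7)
The plan is to evaluate both sides directly from the definition. The expression to compute is
\[
\|\chi_E\|_{{\rm W}L^\Phi}
=\sup_{t>0}\bigl\|t\,\chi_{\{x\,:\,\chi_E(x)>t\}}\bigr\|_{L^\Phi},
\]
and the key observation is that the level set $\{x:\chi_E(x)>t\}$ equals $E$ for every $t\in(0,1)$ and is empty for $t\ge 1$, so the supremum collapses to
\[
\|\chi_E\|_{{\rm W}L^\Phi}=\sup_{0<t<1}\|t\chi_E\|_{L^\Phi}.
\]

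Next, I would compute $\|t\chi_E\|_{L^\Phi}$ from its Luxemburg definition. Since $t\chi_E/\lambda$ takes the single nonzero value $t/\lambda$ on $E$, the modular condition
\[
\int_{{\mathbb R}^n}\Phi\!\left(\frac{t\chi_E(x)}{\lambda}\right){\rm d}x
=\Phi\!\left(\frac{t}{\lambda}\right)|E|\le 1
\]
rearranges, using that $\Phi$ is continuous and strictly increasing (hence bijective on $[0,\infty)$), to $t/\lambda\le\Phi^{-1}(1/|E|)$. The infimum is therefore attained, yielding $\|t\chi_E\|_{L^\Phi}=t\bigl\{\Phi^{-1}(1/|E|)\bigr\}^{-1}$. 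Taking the supremum over $t\in(0,1)$ produces the claimed value $\{\Phi^{-1}(1/|E|)\}^{-1}$.

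There is no serious obstacle: the argument is a direct unraveling of definitions. The only points requiring a line of care are (i) justifying that the bijectivity of $\Phi$ on $[0,\infty)$ (noted immediately after the Young function definition) really lets me invert $\Phi(t/\lambda)\le1/|E|$ to a clean inequality in $\lambda$, and (ii) handling the degenerate cases $|E|=0$ and $|E|=\infty$ by the conventions $\Phi^{-1}(\infty)=\infty$ and $\Phi^{-1}(0)=0$, so both sides of the identity agree. As a sanity check one can re-derive the same value from the modular representation in the preceding remark: the distribution function of $\chi_E/\lambda$ is $|E|$ on $(0,1/\lambda)$ and $0$ beyond, so the quantity $\sup_{t>0}\Phi(t)|\{|\chi_E|/\lambda>t\}|$ equals $\Phi(1/\lambda)|E|$, and the condition $\le 1$ gives exactly $\lambda\ge\{\Phi^{-1}(1/|E|)\}^{-1}$.
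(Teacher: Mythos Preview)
Your proposal is correct. The paper does not supply its own proof of this lemma; it is stated with a citation to \cite[Example 45]{SDH20} and used as a black box in the proof of Lemma~\ref{lem:median0}. Your argument---reducing the weak-Orlicz supremum to $\sup_{0<t<1}\|t\chi_E\|_{L^\Phi}$ and then computing the Luxemburg norm of a constant on $E$ via bijectivity of $\Phi$---is the standard direct verification, and the alternative route through the modular representation in the preceding remark is an equally valid sanity check.
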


The boundedness of the Hardy-Littlewood maximal operator $M$ on ${\rm W}L^\Phi({\mathbb R}^n)$ is obtained by Kawasumi, Nakai and Shi, as follows.

\begin{definition}
A Young function $\Phi:[0,\infty)\to[0,\infty)$ is said to satisfy the $\nabla_2$-condition, denoted by $\Phi\in\nabla_2$, 
if there exists a constant $k>1$, called the $\nabla_2$-constant, such that
\begin{equation*}
\Phi(r)\le\frac1{2k}\Phi(kr)
\quad \text{for} \quad
r>0.
\end{equation*}
\end{definition}

\begin{theorem}[{\cite[Theorem 3.2]{KNS21}}]\label{thm:M-wLphi-bdd}
Let $\Phi$ be a Young function.
If $\Phi\in\nabla_2$, then there exists $C_\Phi>0$ such that
\[
\sup_{t>0}\Phi(t)\left|\left\{x\in{\mathbb R}^n\,:\,Mf(x)>t\right\}\right|
\le
\sup_{t>0}\Phi(t)\left|\left\{x\in{\mathbb R}^n\,:\,C_\Phi|f(x)|>t\right\}\right|.
\]
\end{theorem}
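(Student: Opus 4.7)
The plan is to reduce the weak-type Orlicz estimate to the classical weak $(1,1)$ bound for $M$ via a Calder\'on--Zygmund-style truncation of $f$, then to use the $\nabla_2$ condition twice: once to convert a tail integral of $1/\Phi$ into a pointwise bound on $1/\Phi$, and once to make the final constant swallowable. Write $M_0\equiv\sup_{s>0}\Phi(s)|\{x\in{\mathbb R}^n:C_\Phi|f(x)|>s\}|$; equivalently $|\{|f|>u\}|\le M_0/\Phi(C_\Phi u)$ for every $u>0$. The goal is to find $C_\Phi>1$ (depending only on $n$ and the $\nabla_2$-constant $k$) such that $\Phi(t)|\{Mf>t\}|\le M_0$ for all $t>0$.

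Fix $t>0$ and split $f=f_1+f_2$ with $f_1\equiv f\chi_{\{|f|>t/2\}}$. Then $\|f_2\|_{L^\infty}\le t/2$, so $Mf_2\le t/2$ pointwise, whence $\{Mf>t\}\subseteq\{Mf_1>t/2\}$. The weak $(1,1)$ boundedness of $M$ gives a dimensional constant $A_n$ with
\[
|\{Mf>t\}|\le |\{Mf_1>t/2\}|\le \frac{2A_n}{t}\int_{\{|f|>t/2\}}|f(x)|\,{\rm d}x.
\]
Layer-cake rewrites the last integral as $\tfrac{t}{2}|\{|f|>t/2\}|+\int_{t/2}^\infty |\{|f|>s\}|\,{\rm d}s$, and plugging in the hypothesis $|\{|f|>u\}|\le M_0/\Phi(C_\Phi u)$ bounds it above by
\[
\frac{t M_0}{2\,\Phi(C_\Phi t/2)}+\frac{M_0}{C_\Phi}\int_{C_\Phi t/2}^{\infty}\frac{{\rm d}u}{\Phi(u)}.
\]

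The next step is the $\nabla_2$ tail estimate. Iterating $\Phi(kr)\ge 2k\,\Phi(r)$ and summing a geometric series over dyadic intervals $[k^{j}v,k^{j+1}v]$ yields
\[
\int_v^\infty\frac{{\rm d}u}{\Phi(u)}\le \frac{2(k-1)\,v}{\Phi(v)}
\]
for every $v>0$. Applied with $v=C_\Phi t/2$, this turns the previous display into a multiple of $t M_0/\Phi(C_\Phi t/2)$, so
\[
\Phi(t)|\{Mf>t\}|
\le \frac{D_{n,k}\,\Phi(t)}{\Phi(C_\Phi t/2)}\,M_0,
\]
where $D_{n,k}$ depends only on $n$ and $k$. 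The second use of $\nabla_2$ is to compare $\Phi(t)$ with $\Phi(C_\Phi t/2)$: iterating $\Phi(k^{m}t)\ge(2k)^{m}\Phi(t)$ shows that if we choose $m$ so that $(2k)^{m}\ge D_{n,k}$ and then set $C_\Phi\equiv 2k^{m}$, then $\Phi(t)/\Phi(C_\Phi t/2)=\Phi(t)/\Phi(k^m t)\le (2k)^{-m}\le D_{n,k}^{-1}$, yielding $\Phi(t)|\{Mf>t\}|\le M_0$. Taking the supremum over $t>0$ gives the claimed inequality.

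The main obstacle is clean bookkeeping: the $\nabla_2$ hypothesis has to carry two distinct tasks (controlling the tail integral $\int_v^\infty{\rm d}u/\Phi(u)$ and beating the dimensional constant through the ratio $\Phi(t)/\Phi(C_\Phi t/2)$), and the final constant $C_\Phi$ depends on $n$ only through the weak $(1,1)$ constant and on $\Phi$ only through $k$. Care is also needed because the hypothesis does not guarantee $M_0<\infty$; the inequality nevertheless holds trivially in that case, so the estimate may be stated for arbitrary measurable $f$ with the usual convention $\infty\le\infty$.
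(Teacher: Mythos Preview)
The paper does not supply its own proof of this statement; it is quoted verbatim from \cite[Theorem 3.2]{KNS21} and used as a black box. There is therefore nothing in the present paper to compare your argument against.

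On its own merits your proof is correct and follows the classical route: truncate $f$ at level $t/2$, feed the large part into the weak $(1,1)$ inequality for $M$, and convert the resulting $L^1$ norm into a distribution-function integral that the $\nabla_2$ condition controls. The two uses of $\nabla_2$ (summing the tail $\int_v^\infty du/\Phi(u)\lesssim v/\Phi(v)$ via the geometric decay $\Phi(k^j v)\ge(2k)^j\Phi(v)$, and then absorbing the dimensional constant by choosing $C_\Phi=2k^m$) are carried out cleanly, and the bookkeeping is sound because the intermediate constant $D_{n,k}=2A_n(k-\tfrac12)$ is independent of $C_\Phi$, so no circularity arises. One small point worth making explicit: when $M_0<\infty$ your tail bound already forces $f_1\in L^1$, so the weak $(1,1)$ inequality is legitimately applicable; you essentially note this, but it is logically prior to the layer-cake step rather than a consequence of it.
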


\section{Estimates of sharp maximal functions}\label{ss:sharp}

In this paper, we follow the discussion by Nakamura and Sawano in \cite{NaSa17}.
The paper \cite{Nogayama19} is also considered the similar argument.

\subsection{Norm estimates for the sharp maximal functions}

\begin{definition}\label{def:191228-1}
Let $f\in L^0({\mathbb R}^n)$ and $Q\in{\mathcal Q}({\mathbb R}^n)$.
\begin{itemize}
\item[(1)] The decreasing rearrangement of $f$ on ${\mathbb R}^n$ is defined by
\begin{equation*}
f^\ast(t)
\equiv
\inf\{\alpha>0:|\{x\in{\mathbb R}^n\,:\,|f(x)|>\alpha\}|\le t\},
\quad
0<t<\infty.
\end{equation*}
\item[(2)] The {\it local mean oscillation} of $f$ on $Q$ is defined by
\begin{equation*}
\omega_\lambda(f;Q)
\equiv
\inf_{c\in{\mathbb C}}((f-c)\chi_Q)^\ast(\lambda|Q|),
\quad
0<\lambda<2^{-1}.
\end{equation*}
\item[(3)] Assume that the function $f$ is real--valued.
Then, the {\it median} of $f$ over $Q$, which is denoted by $m_f(Q)$, is a real number satisfying
\begin{equation*}
|\{x\in Q:f(x)<m_f(Q)\}|,
\quad
|\{x\in Q:f(x)>m_f(Q)\}|
\le\frac{|Q|}2.
\end{equation*}
\end{itemize}
\end{definition}

Note that the median $m_f(Q)$ is possibly non-unique.

Fix $Q_0\in{\mathcal Q}({\mathbb R}^n)$.
Denote by ${\mathcal D}(Q_0)$ the set of all cubes obtained by bisecting $Q_0$ finitely many times.
For $0<\lambda<2^{-1}$ and $Q_0\in{\mathcal Q}$, the {\it dyadic local sharp maximal operator} $M^{\sharp,d}_{\lambda,Q_0}$ is denoted by
\begin{equation*}
M^{\sharp,d}_{\lambda,Q_0}f(x)
\equiv
\sup_{Q\in{\mathcal D}(Q_0)}\omega_\lambda(f;Q)\chi_Q(x),
\quad
x\in{\mathbb R}^n,f\in L^0({\mathbb R}^n).
\end{equation*}
Moreover, we use the following sharp maximal operator
\begin{equation*}
M^{\sharp,d}_\lambda f(x)
\equiv
\sup_{Q_0\in{\mathcal Q}}\sup_{Q\in{\mathcal D}(Q_0)}\omega_\lambda(f;Q)\chi_Q(x),
\quad
x\in{\mathbb R}^n,f\in L^0({\mathbb R}^n).
\end{equation*}
Let $f\in L^1_{\rm loc}({\mathbb R}^n)$.
The {\it Fefferman-Stein sharp maximal function} is defined by
\begin{equation*}
f^{\sharp,\eta}(x)
\equiv
\sup_{Q\in{\mathcal Q}}
\left(\frac{\chi_Q(x)}{|Q|}\int_Q|f(y)-f_Q|^\eta\,{\rm d}y\right)^{\frac1\eta}.
\quad x\in{\mathbb R}^n,
\end{equation*}
When $\eta=1$, $f^{\sharp,1}$ abbreviates to $f^\sharp$.
Jawerth and Torchinsky proved a pointwise equivalence between these two types of the sharp maximal operators in \cite{JaTo85}:
\begin{equation}\label{eq:sharp-maximal}
M^{(\eta)}M^{\sharp,d}_\lambda f(x)
\sim
f^{\sharp,\eta}(x),
\quad
x\in{\mathbb R}^n
\end{equation}
for sufficiently small $\lambda>0$.

Here, some important norm estimates for the sharp maximal function $M^{\sharp,d}_\lambda f$ on weak Morrey spaces can be provided as follows.

\begin{theorem}\label{thm:191228-1}
Let $0<s\le q\le p<\infty$.
Then for all $f\in L^0({\mathbb R}^n)$,
\begin{equation*}
\|f\|_{{\rm W}{\mathcal M}^p_q}
\sim
\left\|M^{\sharp,d}_\lambda f\right\|_{{\rm W}{\mathcal M}^p_q}
+
\|f\|_{{\rm W}{\mathcal M}^p_s}.
\end{equation*}
\end{theorem}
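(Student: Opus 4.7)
The plan is to treat the two directions separately. For the easy $\lesssim$ direction, the embedding $\|f\|_{{\rm W}\mathcal{M}^p_s}\lesssim\|f\|_{{\rm W}\mathcal{M}^p_q}$ follows from the pointwise inclusion $\|g\chi_Q\|_{{\rm W}L^s}\le|Q|^{1/s-1/q}\|g\chi_Q\|_{{\rm W}L^q}$ on each cube (valid since $s\le q$ and $|Q|<\infty$), after multiplying by $|Q|^{1/p-1/s}$ and taking the supremum. For the sharp maximal part, Chebyshev applied to the local mean oscillation yields $M^{\sharp,d}_\lambda f(x)\lesssim_\lambda Mf(x)$ pointwise, whence Proposition \ref{prop:wMpq-bdd} gives $\|M^{\sharp,d}_\lambda f\|_{{\rm W}\mathcal{M}^p_q}\lesssim\|f\|_{{\rm W}\mathcal{M}^p_q}$.

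The substantive half is $\gtrsim$. Fix $Q_0\in\mathcal{Q}({\mathbb R}^n)$. By Lerner's local oscillation decomposition, for $\lambda\le2^{-n-2}$ there exists a sparse subfamily $\mathcal{S}(Q_0)\subset\mathcal{D}(Q_0)$ with
\[
|f(x)-m_f(Q_0)|\lesssim\sum_{Q\in\mathcal{S}(Q_0)}\omega_\lambda(f;Q)\chi_Q(x)\quad\text{for a.e. }x\in Q_0.
\]
Split $\{x\in Q_0:|f(x)|>t\}\subset\{x\in Q_0:|m_f(Q_0)|>t/2\}\cup\{x\in Q_0:|f(x)-m_f(Q_0)|>t/2\}$ and handle the pieces in turn. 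For the median piece, use $|m_f(Q_0)|\le(f\chi_{Q_0})^{\ast}(|Q_0|/2)\le 2^{1/s}|Q_0|^{-1/s}\|f\chi_{Q_0}\|_{{\rm W}L^s}$, so on the nonempty event we have $t\lesssim|Q_0|^{-1/p}\|f\|_{{\rm W}\mathcal{M}^p_s}$, and the contribution $|Q_0|^{1/p-1/q}t|Q_0|^{1/q}=|Q_0|^{1/p}t$ is then $\lesssim\|f\|_{{\rm W}\mathcal{M}^p_s}$.

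For the oscillation piece, use $\omega_\lambda(f;Q)\le M^{\sharp,d}_\lambda f(y)$ for every $y\in Q$ to deduce $\omega_\lambda(f;Q)\le\frac{1}{|Q|}\int_Q M^{\sharp,d}_\lambda f(y)\,{\rm d}y$, hence
\[
\sum_{Q\in\mathcal{S}(Q_0)}\omega_\lambda(f;Q)\chi_Q(x)\le\sum_{Q\in\mathcal{S}(Q_0)}\left(\frac{1}{|Q|}\int_Q M^{\sharp,d}_\lambda f(y)\,{\rm d}y\right)\chi_Q(x),
\]
which is the sparse averaging operator acting on $M^{\sharp,d}_\lambda f$. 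The main obstacle is to bound this operator on ${\rm W}\mathcal{M}^p_q$ uniformly in the sparse family; I plan to handle it by dominating the sparse average pointwise by a finite iteration of the dyadic Hardy--Littlewood maximal operator (using the geometric decay $\sum_{Q'\subset Q,\;Q'\in\mathcal{S}_{k+1}}|Q'|\le\tfrac12|Q|$) and then invoking Proposition \ref{prop:wMpq-bdd} together with the vector-valued bound of Proposition \ref{prop:FS-WLp}. Combining the median and oscillation estimates, multiplying by $|Q_0|^{1/p-1/q}$, taking the supremum over $Q_0$ and $t$, and using the quasi-triangle inequality of $\|\cdot\|_{{\rm W}\mathcal{M}^p_q}$ completes the $\gtrsim$ direction.
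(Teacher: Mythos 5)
Your overall architecture matches the paper's: split into an easy direction and a hard direction, use Lerner's decomposition for the hard one, and treat the median term and the oscillation sum separately. However there are two genuine gaps, both traceable to the fact that the theorem allows $0<s\le q\le p<\infty$, i.e.\ $q$ may be $\le 1$.

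\textbf{Gap in the easy direction.} You bound $M^{\sharp,d}_\lambda f\lesssim Mf$ pointwise and then invoke Proposition~\ref{prop:wMpq-bdd}. But that proposition requires $1<q\le p$, and the Hardy--Littlewood maximal operator is simply not bounded on ${\rm W}\mathcal{M}^p_q$ when $q\le 1$. The paper circumvents this with the sharper pointwise bound $M^{\sharp,d}_\lambda f(x)\lesssim M(|f|^\eta)(x)^{1/\eta}$ (Lemma~\ref{app:sharp}~(3)), choosing $\eta\in(0,q)$, so that $\|(M(|f|^\eta))^{1/\eta}\|_{{\rm W}\mathcal{M}^p_q}=\|M(|f|^\eta)\|_{{\rm W}\mathcal{M}^{p/\eta}_{q/\eta}}^{1/\eta}$ has exponent $q/\eta>1$ and Proposition~\ref{prop:wMpq-bdd} applies. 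You need this trick.

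\textbf{Gap in the oscillation piece.} You pass from $\omega_\lambda(f;Q)\le\inf_{y\in Q}M^{\sharp,d}_\lambda f(y)$ (the useful form) to the weaker $\omega_\lambda(f;Q)\le\langle M^{\sharp,d}_\lambda f\rangle_Q$ and thereby reduce to bounding a sparse averaging operator $\mathcal{A}_{\mathcal{S}}$ acting on $M^{\sharp,d}_\lambda f$. This is both a detour and the source of the problem: you then propose to dominate $\mathcal{A}_{\mathcal{S}}$ pointwise by ``a finite iteration of the dyadic Hardy--Littlewood maximal operator.'' That does not work; a sparse average over a tower of $N$ nested cubes around $x$ can be comparable to $N\cdot Mg(x)$, and no fixed iterate $M^k$ dominates $\mathcal{A}_{\mathcal{S}}$ pointwise uniformly over sparse families. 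The mechanism that actually controls the sparse sum here is the disjointification built into the sparse structure: since $|\Omega_{k+1}\cap Q^k_j|\le|Q^k_j|/2$, one has $\chi_{Q^k_j}\lesssim\bigl(M\chi_{\Omega_{k+1}^c\cap Q^k_j}\bigr)^\eta$ for any $\eta>0$, the sets $\Omega_{k+1}^c\cap Q^k_j$ are pairwise disjoint, and $\omega_\lambda(f;Q^k_j)\le M^{\sharp,d}_{\lambda,Q_0}f$ on $Q^k_j$. Choosing $\eta>\max(1,q^{-1})$ and applying the vector-valued bound of Proposition~\ref{prop:FS-WLp} then collapses the whole sparse sum to $\|(M^{\sharp,d}_{\lambda,Q_0}f)\chi_{Q_0}\|_{{\rm W}L^q}$. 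Your plan names both the geometric decay and Proposition~\ref{prop:FS-WLp}, but the link between them (the disjointified indicator $\chi_{\Omega_{k+1}^c\cap Q^k_j}$ and the $\eta$-power needed to make the vector-valued inequality applicable when $q\le1$) is exactly the missing step. Finally, a small correction: the median bound $|m_f(Q_0)|\le(f\chi_{Q_0})^\ast(\lambda|Q_0|)$ in Lemma~\ref{app:sharp}~(2) requires $\lambda<2^{-1}$ strictly, so you cannot take $\lambda=1/2$ there; use any $\lambda\in(0,2^{-1})$ and, as the paper does, pass through Proposition~\ref{prop:normable-wLp} with an $s_0<s$ to reach $\|f\chi_{Q_0}\|_{{\rm W}L^s}$.
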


\begin{theorem}\label{thm:230124-1}
Let $0<q\le p<\infty$.
If $f\in L^0({\mathbb R}^n)$ satisfies
\[
m_f(2^lQ)\to0
\]
as $l\to\infty$ for any $Q\in{\mathcal Q}({\mathbb R}^n)$ and for some medians $\{m_f(2^lQ)\}_{l\in{\mathbb N}_0}$, then
\[
\|f\|_{{\rm W}{\mathcal M}^p_q}
\lesssim
\left\|M_\lambda^{\sharp,d}f\right\|_{{\rm W}{\mathcal M}^p_q}.
\]
\end{theorem}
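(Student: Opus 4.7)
The strategy is a cube-by-cube median decomposition: for each $Q\in{\mathcal Q}({\mathbb R}^n)$ I write
\[
f\chi_Q=(f-m_f(Q))\chi_Q+m_f(Q)\chi_Q
\]
and estimate the two pieces in $\|\cdot\|_{{\rm W}L^q}$ separately. After multiplying by $|Q|^{1/p-1/q}$ and taking the supremum over $Q$, the desired inequality $\|f\|_{{\rm W}{\mathcal M}^p_q}\lesssim\|M^{\sharp,d}_\lambda f\|_{{\rm W}{\mathcal M}^p_q}$ follows.

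The constant piece $|Q|^{1/p}|m_f(Q)|$ is where the decay hypothesis enters. Since $m_f(2^l Q)\to 0$, the telescoping
\[
|m_f(Q)|\le\sum_{l=0}^\infty|m_f(2^l Q)-m_f(2^{l+1}Q)|
\]
is valid, and fixing $\lambda>0$ small enough (e.g.\ $\lambda\le 2^{-n-1}$) the standard comparison of medians on nested cubes gives $|m_f(2^l Q)-m_f(2^{l+1}Q)|\lesssim\omega_\lambda(f;2^{l+1}Q)$. The elementary chain of inequalities
\[
\omega_\lambda(f;R)\le\inf_{y\in R}M^{\sharp,d}_\lambda f(y)\le|R|^{-1/q}\|M^{\sharp,d}_\lambda f\cdot\chi_R\|_{{\rm W}L^q}\le|R|^{-1/p}\|M^{\sharp,d}_\lambda f\|_{{\rm W}{\mathcal M}^p_q}
\]
then collapses the telescoping sum to a convergent geometric series in $|2^{l+1}Q|^{-1/p}$, giving $|m_f(Q)|\lesssim|Q|^{-1/p}\|M^{\sharp,d}_\lambda f\|_{{\rm W}{\mathcal M}^p_q}$.

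For the oscillation piece, I apply a Lerner-type pointwise sparse decomposition inside $Q$: there exists a sparse family ${\mathcal S}\subset{\mathcal D}(Q)$ (with sparseness constant depending only on $n$) such that, for a.e.\ $x\in Q$,
\[
|f(x)-m_f(Q)|\chi_Q(x)\lesssim\sum_{R\in{\mathcal S}}\omega_\lambda(f;R)\chi_R(x)\le\sum_{R\in{\mathcal S}}\frac{1}{|R|}\int_R M^{\sharp,d}_\lambda f(y)\,{\rm d}y\cdot\chi_R(x),
\]
where I have used $\omega_\lambda(f;R)\le\inf_R M^{\sharp,d}_\lambda f\le\frac{1}{|R|}\int_R M^{\sharp,d}_\lambda f$. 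The right-hand side is a sparse operator applied to $M^{\sharp,d}_\lambda f\cdot\chi_Q$, and its boundedness on ${\rm W}L^q$ (classical via interpolation from the $L^p$ bounds for sparse operators, consistent with the weak-Morrey bound of $M$ from Proposition \ref{prop:wMpq-bdd}), combined with $\|M^{\sharp,d}_\lambda f\cdot\chi_Q\|_{{\rm W}L^q}\le|Q|^{1/q-1/p}\|M^{\sharp,d}_\lambda f\|_{{\rm W}{\mathcal M}^p_q}$, closes the estimate.

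The main obstacle is the oscillation piece: producing the Lerner sparse decomposition localized to a single cube $Q$ and then converting the resulting sparse sum into a weak-$L^q$ bound purely in terms of $\|M^{\sharp,d}_\lambda f\|_{{\rm W}{\mathcal M}^p_q}$, without picking up the auxiliary $\|f\|_{{\rm W}{\mathcal M}^p_s}$ term that appears on the right-hand side of Theorem \ref{thm:191228-1}. The role of the decay hypothesis is precisely to make the telescoping for $m_f(Q)$ converge, and it is this feature that eliminates the need for an a priori weak-Morrey bound of lower index.
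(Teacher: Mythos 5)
Your proposal is correct in outline but takes a route genuinely different from the paper's, and it is worth recording the difference. You split $f\chi_Q$ at the very start into the oscillation part $(f-m_f(Q))\chi_Q$ and the median part $m_f(Q)\chi_Q$, treat the oscillation part by a Lerner-type sparse decomposition \emph{inside the fixed cube} $Q$ (exactly as in the proof of Theorem~\ref{thm:191228-1}), and handle the median part by the telescoping
$m_f(Q)=\sum_{l\ge0}\bigl(m_f(2^lQ)-m_f(2^{l+1}Q)\bigr)$,
bounding each increment by $\omega_\lambda(f;2^{l+1}Q)\lesssim|2^{l+1}Q|^{-1/p}\|M^{\sharp,d}_\lambda f\|_{{\rm W}{\mathcal M}^p_q}$ and summing the geometric series. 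The paper instead applies Proposition~\ref{prop:Lerner} with the \emph{expanding ambient cube} $2^lQ_0$, restricts to $Q_0$, and splits the resulting sparse sum into cubes contained in a dyadic child of $Q_0$ (handled as in Theorem~\ref{thm:191228-1}) and ancestor cubes $Q_{0\nu}^{(m)}$, which furnish the same geometric-series-over-scales. Both arguments encode the decay of $m_f(2^lQ)$ in a geometric series, but your separation of the median from the oscillation at the outset is more modular and arguably more transparent, and it uses the median-comparison lemma for nested cubes (a clean, standard tool) rather than a geometric recombination of the sparse family.

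There is one genuine gap: the assertion that the sparse operator $\sum_R\bigl(|R|^{-1}\int_R M^{\sharp,d}_\lambda f\bigr)\chi_R$ is bounded on ${\rm W}L^q$ \lq\lq classically via interpolation'' is not justified here, and it actually fails (or is far from routine) for $q\le1$, a range permitted by the hypothesis $0<q\le p<\infty$. Passing from $\omega_\lambda(f;R)\le\inf_R M^{\sharp,d}_\lambda f$ to the average over $R$ is a wasteful step and is the source of the trouble; you should stop at the infimum. The correct closure, which works for every $q>0$, is the one used in the paper for the term ${\bf I}$: exploit sparseness to write $\chi_R\lesssim(M\chi_{E_R})^\eta$ with $E_R\subset R$ pairwise disjoint, $|E_R|\ge|R|/2$, and $\eta>\max(1,q^{-1})$; then $\omega_\lambda(f;R)(M\chi_{E_R})^\eta\le\bigl(M[(M^{\sharp,d}_\lambda f)^{1/\eta}\chi_{E_R}]\bigr)^\eta$; apply the Fefferman--Stein vector-valued inequality of Proposition~\ref{prop:FS-WLp}(2) on ${\rm W}L^{q\eta}$; and finally use disjointness of the $E_R$ to land on $\|M^{\sharp,d}_\lambda f\,\chi_Q\|_{{\rm W}L^q}$. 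Note also that Proposition~\ref{prop:Lerner} produces an extra $4M^{\sharp,d}_{\lambda_n,Q}f$ term alongside the sparse sum, which you omit but which is trivially absorbed into the same bound. With these repairs the argument is complete.
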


\begin{corollary}\label{cor:sharp-maximal}
Let $0<q\le p<\infty$.
If $f\in L^0({\mathbb R}^n)$ satisfies
\[
m_f(2^lQ)\to0
\]
as $l\to\infty$ for any $Q\in{\mathcal Q}({\mathbb R}^n)$ and for some medians $\{m_f(2^lQ)\}_{l\in{\mathbb N}_0}$, then
\[
\|f\|_{{\rm W}{\mathcal M}^p_q}
\sim
\left\|M_\lambda^{\sharp,d}f\right\|_{{\rm W}{\mathcal M}^p_q}.
\]
\end{corollary}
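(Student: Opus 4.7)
The plan is to obtain Corollary \ref{cor:sharp-maximal} as an essentially immediate consequence of Theorems \ref{thm:191228-1} and \ref{thm:230124-1}, by combining the upper and lower bounds for $\|M_\lambda^{\sharp,d}f\|_{{\rm W}{\mathcal M}^p_q}$ and $\|f\|_{{\rm W}{\mathcal M}^p_q}$ that they respectively furnish.

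For the upper bound $\left\|M_\lambda^{\sharp,d}f\right\|_{{\rm W}{\mathcal M}^p_q} \lesssim \|f\|_{{\rm W}{\mathcal M}^p_q}$, I would apply Theorem \ref{thm:191228-1} with the admissible parameter choice $s = q$, which meets the hypothesis $0 < s \le q \le p < \infty$. The equivalence stated there then becomes
\[
\|f\|_{{\rm W}{\mathcal M}^p_q}
\sim
\left\|M_\lambda^{\sharp,d}f\right\|_{{\rm W}{\mathcal M}^p_q} + \|f\|_{{\rm W}{\mathcal M}^p_q},
\]
and unraveling the $\sim$ symbol yields a constant $C>0$, depending only on $n$, $p$, $q$, and $\lambda$, for which $\left\|M_\lambda^{\sharp,d}f\right\|_{{\rm W}{\mathcal M}^p_q} \le C\|f\|_{{\rm W}{\mathcal M}^p_q}$. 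Observe that this half of the argument uses no hypothesis on the medians of $f$.

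For the matching reverse bound $\|f\|_{{\rm W}{\mathcal M}^p_q} \lesssim \left\|M_\lambda^{\sharp,d}f\right\|_{{\rm W}{\mathcal M}^p_q}$, I would invoke Theorem \ref{thm:230124-1} verbatim, since the assumption $m_f(2^lQ) \to 0$ as $l \to \infty$ (for every $Q\in{\mathcal Q}({\mathbb R}^n)$ along some choice of medians) is precisely the hypothesis of that theorem. Combining these two one-sided estimates gives the claimed two-sided equivalence.

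The substantive content is therefore already absorbed into the two cited theorems, and no genuine obstacle arises at this step. The only thing worth highlighting is the specialization $s = q$ in Theorem \ref{thm:191228-1}, which causes the auxiliary term $\|f\|_{{\rm W}{\mathcal M}^p_s}$ to collapse onto $\|f\|_{{\rm W}{\mathcal M}^p_q}$; this collapse is exactly the mechanism that extracts the one-sided bound on $\left\|M_\lambda^{\sharp,d}f\right\|_{{\rm W}{\mathcal M}^p_q}$ from the two-term equivalence supplied by that theorem.
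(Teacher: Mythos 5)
Your proposal is correct and matches the paper's (implicit) intent: the corollary is stated immediately after Theorems \ref{thm:191228-1} and \ref{thm:230124-1} precisely because it follows by combining the $\lesssim$ direction from Theorem \ref{thm:230124-1} with the reverse bound extracted from Theorem \ref{thm:191228-1}. One small remark: the specialization $s=q$ and the ``unraveling'' step are not actually needed, since the $\gtrsim$ half of Theorem \ref{thm:191228-1} already gives $\|f\|_{{\rm W}{\mathcal M}^p_q}\gtrsim\|M_\lambda^{\sharp,d}f\|_{{\rm W}{\mathcal M}^p_q}+\|f\|_{{\rm W}{\mathcal M}^p_s}\ge\|M_\lambda^{\sharp,d}f\|_{{\rm W}{\mathcal M}^p_q}$ for any admissible $s$, so the one-sided estimate can be read off directly without solving an implicit inequality.
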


To prove Theorems \ref{thm:191228-1} and \ref{thm:230124-1}, we use the following pointwise estimate for the sparse family.
We say that the family $\{Q^k_j\}_{k\in{\mathbb N}_0,j\in J_k}\subset{\mathcal Q}({\mathbb R}^n)$ is a sparse family if the following properties hold:
for each $k\in{\mathbb N}_0$,
\begin{itemize}
\item[(1)] the cubes $\{Q^k_j\}_{j\in J_k}$ are disjoint;
\item[(2)] if $\Omega_k\equiv\bigcup_{j\in J_k}Q^k_j$, then $\Omega_{k+1}\subset\Omega_k$;
\item[(3)] $2|\Omega_{k+1}\cap Q^k_j|\le|Q^k_j|$ for all $j\in J_k$.
\end{itemize}

\begin{proposition}[{\cite{Lerner13}}]\label{prop:Lerner}
Let $f\in L^0({\mathbb R}^n)$, $Q_0\in{\mathcal Q}({\mathbb R}^n)$ and $\lambda_n\equiv2^{-n-2}$.
Then there exists a sparse family of $\{Q^k_j\}_{k\in{\mathbb N}_0,J_k}\subset{\mathcal D}(Q_0)$ such that
\[
|f(x)-m_f(Q_0)|
\le
4M^{\sharp,d}_{\lambda_n,Q_0}f(x)
+
2\sum_{k\in{\mathbb N}_0}\sum_{j\in J_k}\omega_{\lambda_n}(f;Q^k_j)\chi_{Q^k_j}(x),
\quad \text{a.e. $x\in Q_0$}.
\]
\end{proposition}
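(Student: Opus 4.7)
The strategy is to follow Lerner's iterative Calder\'on--Zygmund stopping-time construction. The whole argument reduces to a one-step key lemma: for every cube $Q \in \mathcal{D}(Q_0)$, there exist pairwise disjoint strict subcubes $\{P_j\} \subset \mathcal{D}(Q)$ satisfying (a) the sparsity bound $\sum_j |P_j| \le |Q|/2$, (b) the pointwise inequality $|f(x) - m_f(Q)| \le 2\omega_{\lambda_n}(f; Q)$ for almost every $x \in Q \setminus \bigcup_j P_j$, and (c) the median-stability estimate $|m_f(P_j) - m_f(Q)| \le 2\omega_{\lambda_n}(f; Q)$ for every $j$.

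To prove the key lemma, I would consider the bad set $E_Q \equiv \{x \in Q : |f(x) - m_f(Q)| > 2\omega_{\lambda_n}(f; Q)\}$. A near-minimizer $c$ of the infimum defining $\omega_{\lambda_n}(f; Q)$, combined with the defining property of the median, pins $m_f(Q)$ within distance $\omega_{\lambda_n}(f; Q)$ of $c$ (this step uses $\lambda_n < 1/2$), and the decreasing-rearrangement bound then yields $|E_Q| \le \lambda_n|Q| = 2^{-n-2}|Q|$. Apply a dyadic Calder\'on--Zygmund selection to $\chi_{E_Q}$ inside $Q$ at the level $2^{-n-1}$: the resulting maximal subcubes $\{P_j\} \subset \mathcal{D}(Q)$ satisfy $|P_j \cap E_Q| > 2^{-n-1}|P_j|$ and, by the maximality of the parent, $|P_j \cap E_Q| \le |P_j|/2$. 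Summing gives $\sum_j |P_j| < 2^{n+1}|E_Q| \le 2^{n+1}\lambda_n|Q| = |Q|/2$, proving (a); property (b) is immediate from $E_Q \subset \bigcup_j P_j$ almost everywhere. For (c), $|P_j \setminus E_Q| \ge |P_j|/2$ and on $P_j \setminus E_Q$ one has $|f - m_f(Q)| \le 2\omega_{\lambda_n}(f; Q)$, forcing $m_f(P_j) \in [m_f(Q) - 2\omega_{\lambda_n}(f; Q),\, m_f(Q) + 2\omega_{\lambda_n}(f; Q)]$.

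Next I iterate the key lemma to build the sparse family: applying it to $Q_0$ produces $\{Q^0_j\}_{j \in J_0}$, and applying it to each $Q^k_i$ produces the collection $\{Q^{k+1}_j\}_{j \in J_{k+1}}$ at the next level. Since $|\Omega_k| \le 2^{-k}|Q_0|$, for almost every $x \in Q_0$ there is a finite (possibly empty) maximal chain $Q_0 \supset Q^0_{j_0} \supset Q^1_{j_1} \supset \cdots \supset Q^K_{j_K}$ with $x \in Q^K_{j_K} \setminus \Omega_{K+1}$. Writing the telescoping identity
\[
f(x) - m_f(Q_0) = \bigl(f(x) - m_f(Q^K_{j_K})\bigr) + \bigl(m_f(Q^0_{j_0}) - m_f(Q_0)\bigr) + \sum_{k=0}^{K-1} \bigl(m_f(Q^{k+1}_{j_{k+1}}) - m_f(Q^{k}_{j_k})\bigr),
\]
I would bound the first bracket by $2\omega_{\lambda_n}(f; Q^K_{j_K}) \le 2 M^{\sharp,d}_{\lambda_n, Q_0}f(x)$ via (b), the second by $2\omega_{\lambda_n}(f; Q_0) \le 2 M^{\sharp,d}_{\lambda_n, Q_0}f(x)$ via (c) applied to $Q_0$, and each chain summand by $2\omega_{\lambda_n}(f; Q^k_{j_k})$ via (c) applied to $Q^k_{j_k}$. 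Since $x$ lies in $Q^k_j$ only for the unique index $j = j_k$ at each level $k \le K$, the chain sum coincides with $\sum_{k,j}\omega_{\lambda_n}(f; Q^k_j)\chi_{Q^k_j}(x)$, producing the claimed bound with constants $4$ and $2$.

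The main obstacle is the median-stability property (c). It rests on a delicate interplay between the Calder\'on--Zygmund threshold and the fraction $\lambda_n$: the specific value $\lambda_n = 2^{-n-2}$ is forced by the equality $2^{n+1}\lambda_n = 1/2$, which simultaneously delivers the sparsity gain and guarantees $|P_j \cap E_Q| \le |P_j|/2$, i.e., that more than half of each selected cube $P_j$ lies in the region where $|f - m_f(Q)|$ is controlled. Any relaxation of $\lambda_n$ would either break sparsity or fail to pin $m_f(P_j)$ inside the desired window around $m_f(Q)$.
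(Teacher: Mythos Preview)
Your argument is the standard Lerner stopping-time construction and is essentially correct. Note, however, that the paper does not supply its own proof of this proposition: it is quoted as a black box from \cite{Lerner13}, so there is nothing in the paper to compare your approach against beyond the citation itself. Your write-up faithfully reproduces Lerner's original scheme (the one-step selection lemma with the bad set $E_Q$, the Calder\'on--Zygmund stopping at threshold $2^{-n-1}$, and the telescoping over the resulting chain), and the arithmetic $2^{n+1}\lambda_n=1/2$ that you highlight is exactly the reason for the choice $\lambda_n=2^{-n-2}$.

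One minor technical point worth tightening in your step (c): from $|P_j\setminus E_Q|\ge|P_j|/2$ you conclude that the median $m_f(P_j)$ lies in $[m_f(Q)-2\omega_{\lambda_n}(f;Q),\,m_f(Q)+2\omega_{\lambda_n}(f;Q)]$. Since medians are not unique, what the inequality actually gives is that \emph{some} median of $f$ over $P_j$ lies in that interval; you should either fix a consistent choice of median throughout the iteration or phrase the conclusion for a suitable median, as Lerner does. This is a bookkeeping issue rather than a gap in the idea.
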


To obtain the condition
\[
m_f(2^lQ)\to0
\]
as $l\to\infty$ for any $Q\in{\mathcal Q}({\mathbb R}^n)$, we use the following lemma.

\begin{lemma}\label{lem:median0}
Let $f\in L^0({\mathbb R}^n)$.
If $Mf\in{\rm W}L^\Phi({\mathbb R}^n)$ for some Young function $\Phi:[0,\infty)\to[0,\infty)$, then
\[
\lim_{l\to\infty}m_f(2^lQ)=0
\]
for all $Q\in{\mathcal Q}({\mathbb R}^n)$.
\end{lemma}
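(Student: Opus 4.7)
The plan is to derive a pointwise lower bound for $Mf$ in terms of the median and then combine it with the weak Orlicz hypothesis.

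First, I would show the pointwise inequality $Mf(x)\ge |m_f(Q)|/2$ for every $Q\in\mathcal{Q}({\mathbb R}^n)$ and every $x\in Q$. Assume $m_f(Q)>0$; the case $m_f(Q)<0$ is symmetric and $m_f(Q)=0$ is trivial. By the defining property of the median, $|\{y\in Q:f(y)<m_f(Q)\}|\le |Q|/2$, hence $|\{y\in Q:f(y)\ge m_f(Q)\}|\ge |Q|/2$, and on this set $|f(y)|\ge m_f(Q)=|m_f(Q)|$. Therefore
\[
\frac{1}{|Q|}\int_Q|f(y)|\,{\rm d}y\ge \frac{|m_f(Q)|}{2},
\]
which bounds $Mf(x)$ from below whenever $x\in Q$.

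Next, I would translate the hypothesis $Mf\in{\rm W}L^\Phi({\mathbb R}^n)$ using the modular representation recorded in the Remark following the definition of weak Orlicz spaces: there exists $\lambda>0$ such that $\Phi(s/\lambda)\,|\{x\in{\mathbb R}^n:Mf(x)>s\}|\le 1$ for every $s>0$. Fix $Q\in\mathcal{Q}({\mathbb R}^n)$. If $m_f(2^lQ)=0$ there is nothing to prove for that $l$; otherwise the previous step applied to $2^lQ$ yields $\{Mf>|m_f(2^lQ)|/4\}\supseteq 2^lQ$, so
\[
2^{ln}|Q|\le \bigl|\{x:Mf(x)>|m_f(2^lQ)|/4\}\bigr|\le \Phi\!\left(\frac{|m_f(2^lQ)|}{4\lambda}\right)^{-1}.
\]

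Rearranging gives $\Phi\bigl(|m_f(2^lQ)|/(4\lambda)\bigr)\le (2^{ln}|Q|)^{-1}\to 0$ as $l\to\infty$. Since any Young function is continuous and strictly increasing on $(0,\infty)$ with $\lim_{t\to 0^+}\Phi(t)=0$ and $\Phi(t)>0$ for $t>0$, the only way the left-hand side can tend to $0$ is for $|m_f(2^lQ)|/(4\lambda)$ to tend to $0$; hence $m_f(2^lQ)\to 0$. I do not foresee a serious obstacle: the pointwise lower bound is standard and does not depend on the particular choice of median, while the modular inequality of the Remark supplies a single constant $\lambda$ that works uniformly in $l$, so the only subtle point is ensuring that the finiteness of $\|Mf\|_{{\rm W}L^\Phi}$ is used precisely through this uniform $\lambda$.
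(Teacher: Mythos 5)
Your proof is correct and takes essentially the same route as the paper: establish a pointwise lower bound $\inf_{x\in 2^lQ}Mf(x)\gtrsim|m_f(2^lQ)|$ and feed it into the weak Orlicz hypothesis to force $|m_f(2^lQ)|\to 0$. The only cosmetic differences are that you derive the lower bound directly from the median's defining property rather than via the decreasing rearrangement (parts (1)--(2) of Lemma \ref{app:sharp}), and you invoke the modular characterization of $\|\cdot\|_{{\rm W}L^\Phi}$ rather than the indicator-norm formula of Lemma \ref{lem:Orlicz}; both packagings yield the same estimate.
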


\subsection{Proof of Theorem \ref{thm:191228-1}}

At first, to see the inequality \lq\lq\ $\lesssim$ '', we may show that for each $Q_0\in{\mathcal Q}({\mathbb R}^n)$,
\[
\|f\chi_{Q_0}\|_{{\rm W}L^q}
\lesssim
\left\|\left(M^{\sharp,d}_{\lambda_n,Q_0}f\right)\chi_{Q_0}\right\|_{{\rm W}L^q}
+
|Q_0|^{\frac1q-\frac1s}
\|f\chi_{Q_0}\|_{{\rm W}L^s}.
\]
By the triangle inequality for the quasi-norm $\|\cdot\|_{{\rm W}L^q}$,
\begin{equation}\label{eq:230124-1}
\|f\chi_{Q_0}\|_{{\rm W}L^q}
\lesssim
\|(f-m_f(Q_0))\chi_{Q_0}\|_{{\rm W}L^q}
+
|Q_0|^{\frac1q}|m_f(Q_0)|.
\end{equation}
For the first term, using Proposition \ref{prop:Lerner}, we have
\[
\|(f-m_f(Q_0))\chi_{Q_0}\|_{{\rm W}L^q}
\lesssim
\left\|\left(M^{\sharp,d}_{\lambda_n,Q_0}f\right)\chi_{Q_0}\right\|_{{\rm W}L^q}
+
{\bf I},
\]
where, we defined
\[
{\bf I}
\equiv
\left\|
\sum_{k\in{\mathbb N}_0}\sum_{j\in J_k}\omega_{\lambda_n}(f;Q^k_j)\chi_{Q^k_j}
\right\|_{{\rm W}L^q}.
\]
Since $\{Q^k_j\}_{k\in{\mathbb N}_0,j\in J_k}$ is a sparse family, we have the pointwise estimate $\chi_{Q^k_j}\lesssim M\chi_{\Omega_{k+1}^c\cap Q^k_j}$, and then for $\eta>\max(1,q^{-1})$, Proposition \ref{prop:FS-WLp} follows
\begin{align*}
{\bf I}
&\lesssim
\left\|
\sum_{k\in{\mathbb N}_0}\sum_{j\in J_k}\omega_{\lambda_n}(f;Q^k_j)
\left(M\chi_{\Omega_{k+1}^c\cap Q^k_j}\right)^\eta
\right\|_{{\rm W}L^q}
\lesssim
\left\|
\sum_{k\in{\mathbb N}_0}\sum_{j\in J_k}\omega_{\lambda_n}(f;Q^k_j)
\chi_{\Omega_{k+1}^c\cap Q^k_j}
\right\|_{{\rm W}L^q}\\
&\le
\left\|
\sum_{k\in{\mathbb N}_0}\sum_{j\in J_k}\left(M^{\sharp,d}_{\lambda_n,Q_0}f\right)
\chi_{\Omega_{k+1}^c\cap Q^k_j}
\right\|_{{\rm W}L^q}
\le
\left\|
\left(M^{\sharp,d}_{\lambda_n,Q_0}f\right)\chi_{Q_0}
\right\|_{{\rm W}L^q}.
\end{align*}

Next, we consider the second term in \eqref{eq:230124-1}.
For $\lambda\in(0,2^{-1})$ and $s_0\in(0,s)$, we see that
\begin{align*}
|m_f(Q_0)|
&\le
(f\chi_{Q_0})^\ast(\lambda|Q_0|)
\le
\left(
\frac1{\lambda|Q_0|}\int_0^{\lambda|Q_0|}(f\chi_{Q_0})^\ast(t)^{s_0}\,{\rm d}t
\right)^{\frac1{s_0}}\\
&\lesssim
\left(\frac1{|Q_0|}\int_{Q_0}|f(x)|^{s_0}\,{\rm d}x\right)^{\frac1{s_0}}
\lesssim
\frac1{|Q_0|^{\frac1s}}\|f\chi_{Q_0}\|_{{\rm W}L^s},
\end{align*}
where in the final inequality we used Proposition \ref{prop:normable-wLp}.

Secondly, to see the opposite inequation \lq\lq\ $\gtrsim$ '', we may use the known pointwise estimate
\[
M^{\sharp,d}_\lambda f(x)
\lesssim
M(|f|^\eta)(x)^{\frac1\eta}
\]
for any $\eta>0$ (see Lemma \ref{app:sharp} for detail).
Hence, taking $\eta\in(0,q)$, by Proposition \ref{prop:wMpq-bdd}, we obtain
\begin{align*}
\left\|M^{\sharp,d}_\lambda f\right\|_{{\rm W}{\mathcal M}^p_q}
\lesssim
\left\|(M(|f|^\eta))^{\frac1\eta}\right\|_{{\rm W}{\mathcal M}^p_q}
\lesssim
\|f\|_{{\rm W}{\mathcal M}^p_q},
\end{align*}
as desired.

\subsection{Proof of Theorem \ref{thm:230124-1}}

Fix $Q_0\in{\mathcal Q}({\mathbb R}^n)$.
By the quasi-triangle inequality $\|\cdot\|_{{\rm W}L^q}$,
\[
\|f\chi_{Q_0}\|_{{\rm W}L^q}
\lesssim
\|(f-m_f(2^lQ_0))\chi_{Q_0}\|_{{\rm W}L^q}
+
|Q_0|^{\frac1q}|m_f(2^lQ_0)|.
\]
By the assumption, it follows that
\[
\|f\chi_{Q_0}\|_{{\rm W}L^q}
\lesssim
\underset{l\to\infty}{\rm lim\,sup}\|(f-m_f(2^lQ_0))\chi_{Q_0}\|_{{\rm W}L^q},
\]
and hence, we focus on the quantity
$
\|(f-m_f(2^lQ_0))\chi_{Q_0}\|_{{\rm W}L^q}
$.
Using Proposition \ref{prop:Lerner}, we have
\[
\|(f-m_f(2^lQ_0))\chi_{Q_0}\|_{{\rm W}L^q}
\lesssim
\left\|\left(M^{\sharp,d}_{\lambda_n}f\right)\chi_{Q_0}\right\|_{{\rm W}L^q}
+
{\bf II},
\]
where, we defined
\[
{\bf II}
\equiv
\left\|\left(
\sum_{k\in{\mathbb N}_0}\sum_{j\in J_k}\omega_{\lambda_n}(f;Q^k_j)\chi_{Q^k_j}
\right)\chi_{Q_0}\right\|_{{\rm W}L^q}.
\]
Here, we remark that the family $\{Q^k_j\}_{k\in{\mathbb N}_0,j\in J_k}\subset{\mathcal D}(2^lQ_0)$ is a sparse family generated by $2^lQ_0$.
Setting $\{Q_{0\nu}\}_{\nu=1}^{2^n}$ by a family of the dyadic children of $Q_0$, that is, the family $\{Q_{0\nu}\}_{\nu=1}^{2^n}$ satisfies that for all $\nu=1,\ldots,2^n$,
\[
Q_{0\nu}\in{\mathcal D}(Q_0),
\quad
\ell(Q_{0\nu})=\frac{\ell(Q_0)}2,
\]
we can decompose
\begin{align*}
{\bf II}
&\lesssim
\sum_{\nu=1}^{2^n}
\left\|\left(
\sum_{k\in{\mathbb N}_0}\sum_{j\in J_k,Q^k_j\subset Q_{0\nu}}
\omega_{\lambda_n}(f;Q^k_j)\chi_{Q^k_j}
\right)\chi_{Q_0}\right\|_{{\rm W}L^q}\\
&\qquad+
\sum_{\nu=1}^{2^n}
\left\|\left(
\sum_{k\in{\mathbb N}_0}\sum_{j\in J_k,Q^k_j\supsetneq Q_{0\nu}}
\omega_{\lambda_n}(f;Q^k_j)\chi_{Q^k_j}
\right)\chi_{Q_0}\right\|_{{\rm W}L^q}
\equiv
{\bf II}_1+{\bf II}_2.
\end{align*}
Remark that $\{Q_{0\nu}\}_{\nu=1}^{2^n}\subset{\mathcal D}(2^lQ_0)$ for all $l\in{\mathbb N}$.

For ${\bf II}_1$, we may have the same discussion in the proof of Theorem \ref{thm:191228-1}, and therefore,
\[
{\bf II}_1
\lesssim
\left\|\left(M^{\sharp,d}_{\lambda_n}f\right)\chi_{Q_0}\right\|_{{\rm W}L^q}.
\]

Finally, we estimate ${\bf II}_2$.
A geometric observation for the sparse family allows us to rewrite the summation of ${\bf II}_2$ as follows:
\[
{\bf II}_2
\lesssim
\sum_{\nu=1}^{2^n}
\left\|\left(
\sum_{m=1}^{l+1}\omega_{\lambda_n}(f;Q_{0\nu}^{(m)})\chi_{Q_{0\nu}^{(m)}}
\right)\chi_{Q_{0\nu}}\right\|_{{\rm W}L^q},
\]
where $Q_{0\nu}^{(m)}$ denotes the dyadic $m$-th ancestor of $Q_{0\nu}$, that is, $Q_{0\nu}^{(m)}$ is a unique dyadic cube with respect to $2^lQ_0$ whose side length is $2^m\ell(Q)$ and containing $Q_{0\nu}$ for each $\nu=1,\ldots,2^n$.
Remarking the relation $Q_{0\nu}\subset Q_{0\nu}^{(m)}$, we see that
\begin{align*}
{\bf II}_2
&\lesssim
\sum_{\nu=1}^{2^n}\sum_{m=1}^{l+1}
\omega_{\lambda_n}(f;Q_{0\nu}^{(m)})|Q_{0\nu}|^{\frac1q}
\le
\sum_{\nu=1}^{2^n}|Q_{0\nu}|^{\frac1q}
\sum_{m=1}^{l+1}\frac1{|Q_{0\nu}^{(m)}|^{\frac1q}}
\left\|\left(M^{\sharp,d}_{\lambda_n}f\right)\chi_{Q_{0\nu}^{(m)}}\right\|_{{\rm W}L^q}\\
&\le
\sum_{\nu=1}^{2^n}|Q_{0\nu}|^{\frac1q}
\sum_{m=1}^{l+1}\frac1{|Q_{0\nu}^{(m)}|^{\frac1p}}
\left\|M^{\sharp,d}_{\lambda_n}f\right\|_{{\rm W}{\mathcal M}^p_q}
\lesssim
|Q_0|^{\frac1q-\frac1p}
\left\|M^{\sharp,d}_{\lambda_n}f\right\|_{{\rm W}{\mathcal M}^p_q}.
\end{align*}

Gathering these estimates of ${\bf II}_1$ and ${\bf II}_2$, we conclude the desired result.

\subsection{Proof of Lemma \ref{lem:median0}}

Fix $Q\in{\mathcal Q}({\mathbb R}^n)$.
By Lemma \ref{lem:Orlicz}, we estimate
\begin{align*}
|m_f(2^lQ)|
&\le
(f\chi_{2^lQ})^\ast\left(\frac{|2^lQ|}4\right)
\le
4\inf_{x\in2^lQ}Mf(x)
\le
4\frac{\|Mf\|_{{\rm W}L^\Phi}}{\|\chi_{2^lQ}\|_{{\rm W}L^\Phi}}\\
&=
4\Phi^{-1}\left(\frac1{|2^lQ|}\right)\|Mf\|_{{\rm W}L^\Phi}
\to0,
\end{align*}
as $l\to\infty$ (see lemma \ref{app:sharp} for detail of the first and second inequalities.
This is the desired result.

\section{Endpoint estimates for the Orlicz fractional maximal operators}\label{s:Orlicz-fractional}

\begin{definition}
Let $0\le\alpha<n$, and let $\Phi$ be a Young function.
\begin{itemize}
\item[{\rm (1)}] For each $Q\in{\mathcal Q}({\mathbb R}^n)$, define
\[
\|f\|_{\Phi,Q}
\equiv
\inf\left\{
\lambda>0
\,:\,
\frac1{|Q|}\int_Q\Phi\left(\frac{|f(x)|}\lambda\right)\,{\rm d}x
\le1
\right\}.
\]
\item[{\rm (2)}] The Orlicz fractional maximal operator $M_{\alpha,\Phi}$ is defined by
\[
M_{\alpha,\Phi}f(x)
\equiv
\sup_{Q\in{\mathcal Q}}\chi_Q(x)\ell(Q)^\alpha\|f\|_{\Phi,Q},
\quad x\in{\mathbb R}^n.
\]
\item[{\rm (3)}] When $\alpha=0$, $M_{0,\Phi}$ abbreviates to $M_\Phi$ and is called the Orlicz maximal operator.
\item[{\rm (4)}] In particular, if $\Phi(t)=t\log(3+t)$, $\|\cdot\|_{\Phi,Q}$, $M_\Phi$ and $M_{\alpha,\Phi}$ are also written as $\|\cdot\|_{L\log L,Q}$, $M_{L\log L}$ and $M_{\alpha,L\log L}$, respectively.
\item[{\rm (5)}] When $\Phi(t)=e^t-1$, the Orlicz-average $\|\cdot\|_{\Phi,Q}$ is also written as $\|\cdot\|_{\exp(L),Q}$.
\end{itemize}
\end{definition}

The endpoint estimate for the Orlicz fractional maximal operators on Orlicz spaces are given by Cruz-Uribe and Fiorenza, as follows.

\begin{theorem}[{\cite[Theorem 3.3]{CrFi03}}]\label{thm:CF-M}
Let $0\le\alpha<n$, and let $\Phi$ be a function.
Set
\[
h_\Phi(t)
\equiv
\sup_{s>0}\frac{\Phi(st)}{\Phi(s)},
\quad t>0.
\]
Assume that the mapping
\[
t\mapsto\frac{\Phi(t)}{t^{\frac n\alpha}}
\]
is decreasing whenever $0<\alpha<n$.
If one put
\[
\Psi(t)
\equiv
\frac t{h_\Phi\left(t^{\frac\alpha n}\right)},
\quad t>0,
\]
then for all $f\in L^0({\mathbb R}^n)$,
\[
\Psi(|\{x\in{\mathbb R}^n\,:\,M_{\alpha,\Phi}f(x)>t\}|)
\lesssim
\int_{{\mathbb R}^n}\Phi(|f(x)|)\,{\rm d}x.
\]
Especially, when $\alpha=0$, the weak-type boundedness of the Orlicz-maximal operator is obtained, as follows{\rm :}
\[
|\{x\in{\mathbb R}^n\,:\,M_\Phi f(x)>t\}|
\lesssim
\int_{{\mathbb R}^n}\Phi(|f(x)|)\,{\rm d}x,
\quad
f\in L^0({\mathbb R}^n).
\]
\end{theorem}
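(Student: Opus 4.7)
The strategy is a Vitali covering of the level set $E_t = \{x \in \mathbb{R}^n : M_{\alpha,\Phi}f(x) > t\}$, combined with the infimum characterization of the Luxemburg average $\|\cdot\|_{\Phi,Q}$, the submultiplicative inequality $\Phi(us) \le h_\Phi(u)\Phi(s)$, and the monotonicity of $h_\Phi$. By rescaling $f$ one can reduce to the normalized level $t = 1$, so I shall carry $t$ along purely as bookkeeping.

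For every $x \in E_t$, the definition of $M_{\alpha,\Phi}$ furnishes a cube $Q_x \ni x$ with $\ell(Q_x)^\alpha \|f\|_{\Phi, Q_x} > t$. A standard Vitali-type lemma extracts a countable disjoint subfamily $\{Q_j\}_j$ with $E_t \subset \bigcup_j 5Q_j$, and hence $|E_t| \le 5^n \sum_j |Q_j|$. Crucially, each $Q_j$ is itself contained in $E_t$: every $y \in Q_j$ satisfies $M_{\alpha,\Phi}f(y) \ge \ell(Q_j)^\alpha\|f\|_{\Phi,Q_j} > t$, so $\ell(Q_j)^\alpha \le |E_t|^{\alpha/n}$.

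On each $Q_j$, the strict inequality $\|f\|_{\Phi,Q_j} > t/\ell(Q_j)^\alpha$ together with the infimum definition of the Luxemburg norm forces
\[
|Q_j| \le \int_{Q_j} \Phi\left(\frac{|f(x)|\,\ell(Q_j)^\alpha}{t}\right)\,{\rm d}x \le h_\Phi\left(\frac{\ell(Q_j)^\alpha}{t}\right) \int_{Q_j}\Phi(|f(x)|)\,{\rm d}x,
\]
where the second step is $\Phi(us)\le h_\Phi(u)\Phi(s)$ with $u=\ell(Q_j)^\alpha/t$. Using $\ell(Q_j)^\alpha \le |E_t|^{\alpha/n}$ and the monotonicity of $h_\Phi$ to pull the cube-dependent factor out by $h_\Phi(|E_t|^{\alpha/n}/t)$, summing on $j$ using disjointness yields
\[
|E_t| \le 5^n\, h_\Phi\left(\frac{|E_t|^{\alpha/n}}{t}\right) \int_{\mathbb R^n}\Phi(|f(x)|)\,{\rm d}x,
\]
which rearranges (at $t = 1$) to the asserted $\Psi(|E_t|)\lesssim \int\Phi(|f|)\,{\rm d}x$.

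The main technical point is to confirm that the hypothesis that $t\mapsto \Phi(t)/t^{n/\alpha}$ is decreasing is exactly what keeps $h_\Phi$ finite and ensures $\Psi$ is meaningful: it yields $h_\Phi(u) \le \max(1,u^{n/\alpha})$ and hence $\Psi$ is nondecreasing and nontrivial. The case $\alpha=0$ is a clean specialization in which $\Psi(s) = s/h_\Phi(1) = s$, and the argument above still applies without the decreasing hypothesis, giving the weak-type estimate for $M_\Phi$. No deeper obstacle is expected: the essence is that $M_{\alpha,\Phi}$ admits a Calder\'on--Zygmund-style stopping-cube structure, and the modular inequality on each stopping cube aggregates globally via disjointness plus the polynomial-type growth of $h_\Phi$.
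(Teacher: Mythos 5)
This theorem is cited in the paper from \cite[Theorem 3.3]{CrFi03} and is not re-proved there, so there is no internal proof to compare against; your proposal is instead a reconstruction of the Cruz--Uribe--Fiorenza argument, and it does capture the correct mechanism. The skeleton --- Vitali extraction of disjoint cubes $\{Q_j\}$ from the stopping cubes defining the level set $E_t$, the observation that each such cube is contained in $E_t$ (hence $\ell(Q_j)^\alpha\le|E_t|^{\alpha/n}$), the modular inequality $|Q_j|<\int_{Q_j}\Phi(|f|\ell(Q_j)^\alpha/t)\,{\rm d}x$ forced by the Luxemburg infimum, submultiplicativity $\Phi(us)\le h_\Phi(u)\Phi(s)$, and monotonicity of $h_\Phi$ to pull out a single factor before summing on disjointness --- is exactly the right argument, and your note that the decreasing hypothesis on $\Phi(t)/t^{n/\alpha}$ gives $h_\Phi(u)\le\max(1,u^{n/\alpha})$ is what keeps $\Psi$ well defined.

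The one genuine gap is a circularity in the $\alpha>0$ case that you should close. Your Vitali step needs the family of stopping cubes to have uniformly bounded side length, and you obtain this from $\ell(Q_j)^n\le|E_t|$, which presupposes $|E_t|<\infty$; but $|E_t|<\infty$ is precisely what the covering argument is supposed to deliver. When $\alpha=0$ the issue evaporates because the selection inequality gives $|Q_x|<\int_{\mathbb R^n}\Phi(|f|/t)\,{\rm d}x$ directly, but when $\alpha>0$ the same manipulation only yields $1<t^{-n/\alpha}\int\Phi(|f|)$ and does not bound $\ell(Q_x)$. The standard repair is an exhaustion: define $E_t^N$ using only cubes of side length at most $N$ (or intersect with a growing ball), observe that these cubes are still contained in $E_t^N$ so the argument closes with $N$-independent constants, then let $N\to\infty$ using monotone convergence and the monotonicity of $h_\Phi$. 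Adding this truncation step would make the proof complete; otherwise the approach and all the key inequalities are sound.
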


To prove the main theorem, we gave the end point estimate for the Orlicz fractional maximal operators on Orlicz-Morrey spaces with respect to general Young functions.

\begin{definition}[{\cite{SST12}}]
Let $1\le p<\infty$, and let $\Phi$ be a Young function.
The Orlicz-Morrey space ${\mathcal M}^p_\Phi({\mathbb R}^n)$ is defined to be the space of all $f\in L^0({\mathbb R}^n)$ with the finite norm
\[
\|f\|_{{\mathcal M}^p_\Phi}
\equiv
\sup_{Q\in{\mathcal Q}}|Q|^{\frac1p}\|f\|_{\Phi,Q}.
\]
In particular, when $\Phi(t)=t\log(3+t)$, the Orilcz-Morrey spaces ${\mathcal M}^p_\Phi({\mathbb R}^n)$ are written as ${\mathcal M}^p_{L\log L}({\mathbb R}^n)$ defined in Definition \ref{def:Mp-LlogL}.
\end{definition}

\begin{remark}
It is mentioned by Iida in \cite[Remark 2]{Iida21} that the necessary and sufficiently condition of ${\mathcal M}^p_\Phi({\mathbb R}^n)\ne\{0\}$ is the pointwise inequality
\[
\Phi(t)\lesssim t^p\;\text{for}\;t\ge1
\]
holds.
Therefore, ${\mathcal M}^p_{L\log L}({\mathbb R}^n)\ne\{0\}$ if and only if $p>1$.
\end{remark}

\begin{theorem}\label{thm:220312-H1}
Let $0\le\alpha<n$, $1\le p<\infty$ and $1\le t\le s<\infty$, and let $\Phi$ be a Young function.
If
\[
\frac1s=\frac1p-\frac\alpha n,
\quad
\frac1p=\frac ts,
\]
then
\[
\|M_{\alpha,\Phi}f\|_{{\rm W}{\mathcal M}^s_t}
\sim
\|f\|_{{\mathcal M}^p_\Phi}
\]
for all $f\in{\mathcal M}^p_\Phi({\mathbb R}^n)$.
\end{theorem}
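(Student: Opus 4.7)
For any $Q_0\in{\mathcal Q}({\mathbb R}^n)$ and $x\in Q_0$, the cube $Q_0$ itself is admissible in the supremum defining $M_{\alpha,\Phi}f(x)$, so $M_{\alpha,\Phi}f\geq \ell(Q_0)^\alpha\|f\|_{\Phi,Q_0}\,\chi_{Q_0}$ pointwise. Taking the weak Morrey quasi-norm of both sides and invoking Proposition \ref{thm:chiQwMpq} together with the identity $\alpha/n+1/s=1/p$, one obtains $\|M_{\alpha,\Phi}f\|_{{\rm W}{\mathcal M}^s_t}\geq |Q_0|^{1/p}\|f\|_{\Phi,Q_0}$; the supremum over $Q_0$ finishes this direction.

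\textbf{Upper bound, global part.} Fix $Q_0\in{\mathcal Q}({\mathbb R}^n)$ and decompose $f=f_1+f_2$ with $f_1=f\chi_{3Q_0}$; since $\|\cdot\|_{\Phi,Q}$ is a norm, one has $M_{\alpha,\Phi}f\leq M_{\alpha,\Phi}f_1+M_{\alpha,\Phi}f_2$. For $f_2$, any cube $Q\ni x\in Q_0$ meeting $(3Q_0)^c$ satisfies $\ell(Q)\gtrsim \ell(Q_0)$ by geometry, and together with the Orlicz-Morrey bound $\|f\|_{\Phi,Q}\leq|Q|^{-1/p}\|f\|_{{\mathcal M}^p_\Phi}$ and the identity $\ell(Q)^\alpha|Q|^{-1/p}=|Q|^{-1/s}$ this yields the pointwise estimate $M_{\alpha,\Phi}f_2(x)\lesssim|Q_0|^{-1/s}\|f\|_{{\mathcal M}^p_\Phi}$ on $Q_0$, from which $|Q_0|^{1/s-1/t}\|M_{\alpha,\Phi}f_2\chi_{Q_0}\|_{{\rm W}L^t}\lesssim\|f\|_{{\mathcal M}^p_\Phi}$ follows at once by the trivial bound $\|\cdot\chi_{Q_0}\|_{{\rm W}L^t}\leq|Q_0|^{1/t}\|\cdot\chi_{Q_0}\|_{L^\infty}$.

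\textbf{Upper bound, local part.} Apply the scaled form of Theorem \ref{thm:CF-M}: $\Psi(|\{M_{\alpha,\Phi}f_1>v\}|)\lesssim \int_{3Q_0}\Phi(|f|/v)$ for all $v>0$. With $\mu\equiv\|f\|_{\Phi,3Q_0}\leq |3Q_0|^{-1/p}\|f\|_{{\mathcal M}^p_\Phi}$, so that $\int_{3Q_0}\Phi(|f|/\mu)\leq |3Q_0|$, define a critical level $v_\ast$ implicitly by $\int_{3Q_0}\Phi(|f|/v_\ast)\sim \Psi(|Q_0|)$. Combining the defining inequality $\Phi(sr)\leq h_\Phi(r)\Phi(s)$ (taken with $s=|f|/\mu$ and $r=\mu/v_\ast$) with the explicit form $\Psi(x)=x/h_\Phi(x^{\alpha/n})$ from Theorem \ref{thm:CF-M} and $|3Q_0|\sim |Q_0|$, the critical level will satisfy $v_\ast\lesssim|Q_0|^{-1/s}\|f\|_{{\mathcal M}^p_\Phi}$. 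Splitting $\sup_{v>0} v\,|\{M_{\alpha,\Phi}f_1>v\}\cap Q_0|^{1/t}$ at $v=v_\ast$ (trivial bound $|Q_0|$ for $v\leq v_\ast$; scaled CF bound for $v\geq v_\ast$, where the supremum is attained near $v_\ast$ thanks to the monotonicity furnished by $h_\Phi$) then yields $\|M_{\alpha,\Phi}f_1\chi_{Q_0}\|_{{\rm W}L^t}\lesssim v_\ast |Q_0|^{1/t}$, which closes the local estimate after multiplication by $|Q_0|^{1/s-1/t}$.

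\textbf{Main obstacle.} The delicate step is the bound $v_\ast\lesssim|Q_0|^{-1/s}\|f\|_{{\mathcal M}^p_\Phi}$. The hypothesis in Theorem \ref{thm:CF-M} that $t\mapsto \Phi(t)/t^{n/\alpha}$ is decreasing is precisely what couples the Luxemburg norm $\|f\|_{\Phi,3Q_0}$, the envelope $h_\Phi$, and the Morrey scaling $1/p-1/s=\alpha/n$ into one framework; in particular, the simpler convexity estimate $\Phi(sr)\leq r\Phi(s)$ for $r\leq 1$ is insufficient when $p>1$ and $|Q_0|$ is large, which is why the full submultiplicativity through $h_\Phi$ is needed. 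Verifying the bound uniformly across both regimes $|Q_0|\le 1$ and $|Q_0|>1$ is the main technical delicacy.
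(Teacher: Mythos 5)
Your lower bound and your global part of the upper bound are correct and essentially match the paper's (the paper uses $2Q_0$ rather than $3Q_0$, but that is cosmetic). The serious divergence, and the serious gap, is in the local part.

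The paper never touches the fractional endpoint estimate of Cruz-Uribe--Fiorenza for the local term. Instead it first proves the $\alpha=0$ case $\|M_\Phi f\|_{{\rm W}{\mathcal M}^p_1}\lesssim\|f\|_{{\mathcal M}^p_\Phi}$ by applying the (unweighted, $\alpha=0$) weak modular bound to $f_1=f\chi_{2Q_0}$ and then converting the modular inequality $\lambda a\,|Q_0|\lesssim\int_{2Q_0}\Phi(|f|/\lambda)\,\mathrm{d}z$, where $a=|\{M_\Phi f_1>\lambda\}\cap Q_0|/|Q_0|\le1$, into a Luxemburg norm inequality $\lambda a\lesssim\|f\|_{\Phi,2Q_0}$ via one application of convexity ($\Phi(v)/a\le\Phi(v/a)$). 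It then disposes of $0<\alpha<n$ in one stroke via the pointwise estimate
\[
M_{\alpha,\Phi}f(x)\le\sup_{Q}\min\left(\ell(Q)^\alpha M_\Phi f(x),\ \ell(Q)^{\alpha-\frac np}\|f\|_{{\mathcal M}^p_\Phi}\right)\le\|f\|_{{\mathcal M}^p_\Phi}^{\frac{\alpha p}n}\,M_\Phi f(x)^{1-\frac{\alpha p}n},
\]
after which the exponent relations $s(1-\alpha p/n)=p$ and $t(1-\alpha p/n)=1$ give $\|M_{\alpha,\Phi}f\|_{{\rm W}{\mathcal M}^s_t}\le\|f\|_{{\mathcal M}^p_\Phi}^{\alpha p/n}\|M_\Phi f\|_{{\rm W}{\mathcal M}^p_1}^{1-\alpha p/n}$. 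This reduction is the key idea you are missing.

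Your alternative, applying the scaled fractional estimate of Theorem \ref{thm:CF-M} to $f_1$ and splitting the supremum at a critical level $v_*$, has three problems that together constitute a genuine gap rather than a matter of left-over details. First, Theorem \ref{thm:CF-M} has the hypothesis that $t\mapsto\Phi(t)/t^{n/\alpha}$ is decreasing; Theorem \ref{thm:220312-H1} is stated for an arbitrary Young function, and this monotonicity is not automatic even under the nontriviality condition $\Phi(t)\lesssim t^p$ for $t\ge1$. The paper's interpolation route avoids this hypothesis entirely. Second, your key intermediate claim $v_*\lesssim|Q_0|^{-1/s}\|f\|_{{\mathcal M}^p_\Phi}$ is only asserted, and it is doubtful: writing $\mu=\|f\|_{\Phi,3Q_0}$ and $r=\mu/v_*$, the submultiplicative bound $\int_{3Q_0}\Phi(|f|/v_*)\le h_\Phi(r)|3Q_0|$ combined with $\Psi(x)=x/h_\Phi(x^{\alpha/n})$ would require something like $h_\Phi(C^{-1}u^{-1})\,h_\Phi(u)\lesssim1$ with $u=|Q_0|^{\alpha/n}$, and already for $\Phi(t)=t\log(3+t)$ this product grows like $\log(3+u)$ as $u\to\infty$, so the scheme does not close uniformly over $|Q_0|$. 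Third, even granting a bound on $v_*$, the assertion that ``the supremum is attained near $v_*$'' is not justified: for $v>v_*$ the CF estimate only returns $|\{M_{\alpha,\Phi}f_1>v\}|\lesssim|Q_0|$, with no decay in $v$, so the weak-$L^t$ supremum over $v>v_*$ is not controlled without further work. In short, for the local part you should discard the CF-M route and use the paper's two-step scheme: the $\alpha=0$ modular bound turned into a Luxemburg-norm bound by convexity, followed by the pointwise interpolation $M_{\alpha,\Phi}f\le\|f\|_{{\mathcal M}^p_\Phi}^{\alpha p/n}(M_\Phi f)^{1-\alpha p/n}$.
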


The case $\alpha=0$ and $\Phi(t)=t$ is given by Sawano in \cite[Theorem 27]{Sawano18}.

\begin{proof}[Proof of Theorem {\rm \ref{thm:220312-H1}}]
First, we prove the estimate
$
\|M_\Phi f\|_{{\rm W}{\mathcal M}^p_1}
\lesssim
\|f\|_{{\mathcal M}^p_\Phi}
$
which is the case $\alpha=0$.
Fix $\lambda>0$ and $Q\in{\mathcal Q}({\mathbb R}^n)$.
We split
\[
f
=
f\chi_{2Q}+f\chi_{{\mathbb R}^n\setminus2Q}
\equiv
f_1+f_2.
\]
Using Theorem \ref{thm:CF-M}, we have
\[
|\{y\in Q\,:\,M_\Phi f_1(y)>\lambda\}|
\lesssim
\int_{2Q}\Phi\left(\frac{|f(z)|}\lambda\right)\,{\rm d}z.
\]
By the convexity of $\Phi$,
\begin{align*}
1
&\lesssim
\frac1{|Q|}
\int_{2Q}
\Phi\left(\frac{|f(z)|}{\lambda|\{y\in Q\,:\,M_\Phi f_1(y)>\lambda\}|/|Q|}\right)
\,{\rm d}z.
\end{align*}
This proves
\begin{equation}\label{eq:M1}
\frac{\lambda|\{y\in Q\,:\,M_\Phi f_1(y)>\lambda\}|}{|Q|}
\lesssim
\|f\|_{\Phi,2Q}.
\end{equation}
Additionally, by the simple geometric observation,
\begin{align*}
M_\Phi f_2(y)
&\le
\sup_{R\in{\mathcal Q},\,R\supset Q}
\inf\left\{
\lambda>0
\,:\,
\frac1{|R|}
\int_{R}
\Phi\left(\frac{|f(z)|}\lambda\right)
\,{\rm d}z
\le1
\right\}\\
&\le
|Q|^{-\frac 1p}\|f\|_{{\mathcal M}^p_\Phi}
\end{align*}
for all $y\in Q$.
Then
\begin{equation}\label{eq:M2}
\|M_\Phi f_2\|_{{\rm W}{\mathcal M}^p_1}
\lesssim
\|f\|_{{\mathcal M}^p_\Phi}.
\end{equation}
Hence, combining these estimates \eqref{eq:M1} and \eqref{eq:M2}, we have
\[
\|M_\Phi f\|_{{\rm W}{\mathcal M}^p_1}
\lesssim
\|f\|_{{\mathcal M}^p_\Phi}.
\]

Second, we estimate
$
\|M_{\alpha,\Phi}f\|_{{\rm W}{\mathcal M}^s_t}
\lesssim
\|f\|_{{\mathcal M}^p_\Phi}
$
for all $0<\alpha<n$.
By the definitions of $M_\Phi$ and $\|\cdot\|_{{\mathcal M}^p_\Phi}$,
\[
\ell(Q)^\alpha\|f\|_{\Phi,Q}
\le
\ell(Q)^\alpha M_\Phi f(x),
\quad
\ell(Q)^\alpha\|f\|_{\Phi,Q}
\le
\ell(Q)^{\alpha-\frac np}\|f\|_{{\mathcal M}^p_\Phi},
\]
respectively.
Then
\begin{align*}
M_{\Phi,\alpha}f(x)
&\le
\sup_{Q\in{\mathcal Q}}
\min\left(
\ell(Q)^\alpha M_\Phi f(x),\ell(Q)^{\alpha-\frac np}\|f\|_{{\mathcal M}^p_\Phi}
\right)\\
&\le
\|f\|_{{\mathcal M}^p_\Phi}^{\frac{\alpha p}n}M_\Phi f(x)^{1-\frac{\alpha p}n}.
\end{align*}
Therefore, by the case $\alpha=0$,
\begin{align*}
\|M_{\Phi,\alpha}f\|_{{\rm W}{\mathcal M}^s_t}
\le
\|f\|_{{\mathcal M}^p_\Phi}^{\frac{\alpha p}n}\|M_\Phi f\|_{{\rm W}{\mathcal M}^p_1}^{1-\frac{\alpha p}n}
\lesssim
\|f\|_{{\mathcal M}^p_\Phi}.
\end{align*}

Finally, we prove the opposite estimate
$
\|M_{\alpha,\Phi}f\|_{{\rm W}{\mathcal M}^s_t}
\ge
\|f\|_{{\mathcal M}^p_\Phi}
$.
For all $Q\in{\mathcal Q}({\mathbb R}^n)$, using Proposition \ref{thm:chiQwMpq}, we estimate
\begin{align*}
\|M_{\Phi,\alpha}f\|_{{\rm W}{\mathcal M}^s_t}
\ge
\left\|\chi_Q\cdot|Q|^{\frac\alpha n}\|f\|_{\Phi,Q}\right\|_{{\rm W}{\mathcal M}^s_t}
=
|Q|^{\frac1p}\|f\|_{\Phi,Q},
\end{align*}
as desired.
\end{proof}

\section{Modified commutators with positive kernel}\label{s:modified}

To mention the boundedness of $[b,I_\alpha]$ on the Orlicz-Morrey space ${\mathcal M}^p_{L\log L}({\mathbb R}^n)$, we rewrite the definition of $[b,I_\alpha]$ as follows.

\begin{definition}
Let $0<\alpha<n$.
When $b\in{\rm BMO}({\mathbb R}^n)$, define
\[
|b,I_\alpha|f(x)
\equiv
\int_{{\mathbb R}^n}\frac{|b(x)-b(y)|}{|x-y|^{n-\alpha}}f(y)\,{\rm d}y,
\quad x\in{\mathbb R}^n
\]
for $f\in L^0({\mathbb R}^n)$ as long as the integral converges.
\end{definition}

The commutators of positive kernels is first considered by Bramanti in \cite{Bramanti94}.
Based on the ideas, we may discuss the well-definedness of the commutator $|b,I_\alpha|$.

In this section, we prove the following theorem.

\begin{theorem}\label{thm:absolute}
Let $0<\alpha<n$, $1<t<s<\infty$ and $1<p<\infty$.
Assume that
\[
\frac1s=\frac1p-\frac\alpha n,
\quad
\frac1p=\frac ts.
\]
If $b\in{\rm BMO}({\mathbb R}^n)$, then
\[
\||b,I_\alpha|f\|_{{\rm W}{\mathcal M}^s_t}
\lesssim
\|f\|_{{\mathcal M}^p_{L\log L}}
\]
for all $f\in{\mathcal M}^p_{L\log L}({\mathbb R}^n)$.
\end{theorem}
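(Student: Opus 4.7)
The plan is to dominate $|b,I_\alpha|f$ by the Orlicz fractional maximal operator $M_{\alpha,L\log L}f$ through the sharp-maximal framework of Section \ref{ss:sharp}, and then to invoke Theorem \ref{thm:220312-H1}. More precisely, I will establish the pointwise bound
\[
M^{\sharp,d}_\lambda(|b,I_\alpha|f)(x)
\lesssim
\|b\|_{\rm BMO}\,M_{\alpha,L\log L}f(x),
\quad x\in{\mathbb R}^n,
\]
for sufficiently small $\lambda>0$, together with the median decay $m_{|b,I_\alpha|f}(2^lQ)\to0$ as $l\to\infty$ required to apply Corollary \ref{cor:sharp-maximal}. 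The conclusion then chains as
\[
\||b,I_\alpha|f\|_{{\rm W}{\mathcal M}^s_t}
\sim
\|M^{\sharp,d}_\lambda(|b,I_\alpha|f)\|_{{\rm W}{\mathcal M}^s_t}
\lesssim
\|b\|_{\rm BMO}\,\|M_{\alpha,L\log L}f\|_{{\rm W}{\mathcal M}^s_t}
\lesssim
\|b\|_{\rm BMO}\,\|f\|_{{\mathcal M}^p_{L\log L}},
\]
where the last step is exactly Theorem \ref{thm:220312-H1} applied with $\Phi(t)=t\log(3+t)$.

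To establish the pointwise sharp-maximal estimate, fix a cube $Q_0\in{\mathcal Q}({\mathbb R}^n)$, split $f=f\chi_{2Q_0}+f\chi_{(2Q_0)^c}\equiv f_1+f_2$, and take the subtraction constant $c_{Q_0}\equiv|b,I_\alpha|f_2(x_0)$ at a John-Nirenberg-good point $x_0\in Q_0$ where $|b(x_0)-b_{Q_0}|\lesssim\|b\|_{\rm BMO}$. For the inside piece, Cruz-Uribe and Fiorenza's weak-type bound (Theorem \ref{thm:CF-M}), together with the Orlicz-H\"older inequality for the dual pair $L\log L$ and $\exp(L)$ and the John-Nirenberg estimate $\|b-b_{2Q_0}\|_{\exp(L),2Q_0}\lesssim\|b\|_{\rm BMO}$, yields
\[
(|b,I_\alpha|f_1\cdot\chi_{Q_0})^\ast(\lambda|Q_0|)
\lesssim
\|b\|_{\rm BMO}\,|2Q_0|^{\alpha/n}\|f\|_{L\log L,2Q_0}
\lesssim
\|b\|_{\rm BMO}\,M_{\alpha,L\log L}f(x)
\]
for every $x\in Q_0$. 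For the outside piece, decomposing $(2Q_0)^c=\bigcup_{k\ge1}(2^{k+1}Q_0\setminus 2^kQ_0)$ and exploiting the mean-value smoothness of $|x-y|^{-n+\alpha}$ jointly with the reverse triangle inequality $\bigl||b(x)-b(y)|-|b(x_0)-b(y)|\bigr|\le|b(x)-b(x_0)|$ produces, on most of $Q_0$ (again by John-Nirenberg), the geometrically convergent bound
\[
\bigl||b,I_\alpha|f_2(x)-c_{Q_0}\bigr|
\lesssim
\|b\|_{\rm BMO}\sum_{k\ge1}(k+1)\,2^{-k}(2^k\ell(Q_0))^\alpha\|f\|_{L\log L,2^{k+1}Q_0}
\lesssim
\|b\|_{\rm BMO}\,M_{\alpha,L\log L}f(x),
\]
after absorbing the BMO-growth $|b_{2^{k+1}Q_0}-b_{2Q_0}|\lesssim k\|b\|_{\rm BMO}$ via Orlicz-H\"older annulus by annulus.

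Well-definedness of $|b,I_\alpha|f(x)$ almost everywhere, and the median decay required by Corollary \ref{cor:sharp-maximal}, come from a preliminary Bramanti-type annular estimate \cite{Bramanti94}: combining a direct dyadic decomposition of $|b,I_\alpha|f(x)$ with the Morrey decay $\|f\|_{L\log L,2^kQ}\lesssim 2^{-kn/p}\|f\|_{{\mathcal M}^p_{L\log L}}$, the condition $\alpha<n/p$ (equivalent to $1/s>0$) guarantees convergence of the tail and places $M(|b,I_\alpha|f)$ in some weak Orlicz space ${\rm W}L^\Phi$, at which point Lemma \ref{lem:median0} delivers the medians. The main technical hurdle is the outside-oscillation step: because the absolute value $|b(x)-b(y)|$ destroys the usual cancellation $[b-c,I_\alpha]=[b,I_\alpha]$ available for the signed commutator, one cannot absorb a BMO shift by merely replacing $b$ with $b-b_{2Q_0}$, and one is forced to trade cancellation in $b$ for $x$-regularity of $|x-y|^{-n+\alpha}$; balancing the logarithmic BMO-growth $|b_{2^kQ_0}-b_{2Q_0}|\lesssim k\|b\|_{\rm BMO}$ against the geometric kernel decay is what makes the bookkeeping delicate, particularly when $\alpha$ is close to $n$.
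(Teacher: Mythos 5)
Your plan — prove a global pointwise bound $M^{\sharp,d}_\lambda(|b,I_\alpha|f)\lesssim\|b\|_{\rm BMO}M_{\alpha,L\log L}f$, check median decay via Lemma~\ref{lem:median0}, and then feed everything into Corollary~\ref{cor:sharp-maximal} and Theorem~\ref{thm:220312-H1} — has two gaps that I believe are fatal, and both are precisely the points that force the paper's proof to look the way it does.

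First, the claimed one-term sharp-maximal bound is false. When you compare $|b,I_\alpha|f_2(x)$ with $c_{Q_0}=|b,I_\alpha|f_2(x_0)$, the bracket splits into a kernel-variation piece and a numerator-variation piece. Your extra factor $2^{-k}$ comes only from the kernel variation $\bigl||x-y|^{\alpha-n}-|x_0-y|^{\alpha-n}\bigr|\lesssim\ell(Q_0)/|x_0-y|^{n-\alpha+1}$. The numerator-variation piece, after the reverse triangle inequality, is $|b(x)-b(x_0)|\int_{(2Q_0)^c}|x-y|^{\alpha-n}f(y)\,{\rm d}y=|b(x)-b(x_0)|\,I_\alpha f_2(x)$, and this has \emph{no} geometric decay in $k$: each annulus contributes $(2^k\ell(Q_0))^\alpha\|f\|_{L\log L,2^{k+1}Q_0}$, which is individually $\le M_{\alpha,L\log L}f(x)$ but whose sum over $k$ is not. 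The sum does converge, but only to the Morrey tail $\ell(Q_0)^{\alpha-n/p}\|f\|_{{\mathcal M}^p_{L\log L}}$, which is a constant, not a pointwise quantity controllable by $M_{\alpha,L\log L}f(x)$. This is exactly why Theorem~\ref{thm:sharp-pointwise} carries the extra term $M\bigl((I_\alpha f)^\eta\bigr)^{1/\eta}$ (or $I_\alpha f$ in the improved version in the appendix): the factor $I_\alpha f$ cannot be removed.

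Second, the median-decay step does not go through for the full $|b,I_\alpha|f$. Lemma~\ref{lem:median0} requires $M(|b,I_\alpha|f)\in{\rm W}L^\Phi({\mathbb R}^n)$ for \emph{some} Young function $\Phi$, i.e.\ a global weak Orlicz bound. But $f\in{\mathcal M}^p_{L\log L}({\mathbb R}^n)$ does not imply any global integrability of $f$ (Morrey spaces contain functions like $|x|^{-n/p}$), so $|b,I_\alpha|f$ lies in ${\rm W}{\mathcal M}^s_t$ at best — this is what you are trying to prove — and ${\rm W}{\mathcal M}^s_t$ is not contained in any global ${\rm W}L^\Phi$. Convergence of the tail gives local finiteness, not membership in a weak Orlicz space.

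The paper avoids both obstructions by fixing $Q$, splitting $f=f\chi_{2Q}+f\chi_{(2Q)^c}=f_1+f_2$, and running the sharp-maximal machinery \emph{only} on $|b,I_\alpha|f_1$: here $f_1$ is compactly supported, so Theorem~\ref{thm:C-U} plus Lemma~\ref{lem:elemental} puts $M(|b,I_\alpha|f_1)$ in a genuine global ${\rm W}L^{\Psi_1}$, Lemma~\ref{lem:median0} yields the median decay, and Theorem~\ref{thm:sharp-pointwise} (with both terms, the $I_\alpha f$ one handled via the Adams-type boundedness $\|I_\alpha(|f_1|)\|_{{\rm W}{\mathcal M}^s_t}\lesssim\|f_1\|_{{\mathcal M}^p_1}$) closes the estimate. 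The tail $|b,I_\alpha|f_2$ is then estimated directly on $Q$ by the two terms $F_1,F_2$ using $L\log L$--$\exp(L)$ duality and Lemma~\ref{lem:bmo}, with no sharp maximal at all. If you want to salvage your route, you would have to keep an $I_\alpha f$-type term in the pointwise bound and restrict the median argument to the compactly supported piece, at which point you have essentially reproduced the paper's decomposition. Also note that Theorem~\ref{thm:CF-M} is the weak-type bound for the Orlicz \emph{fractional maximal} operator $M_{\alpha,\Phi}$, not for the commutator; the endpoint modular estimate for $|b,I_\alpha|$ is Theorem~\ref{thm:C-U}.
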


Similar to the proof of Theorem 1.1 in \cite{CrFi03}, we obtain the endpoint modular inequality for $|b,I_\alpha|$, as follows.

\begin{theorem}\label{thm:C-U}
Let $0<\alpha<n$, and let $\Phi$ and $\Psi$ be Young functions defined by
\[
\Phi(t)\equiv t\log(3+t), \quad
\Psi(t)\equiv\left(t\log\left(3+t^{\frac\alpha n}\right)\right)^{\frac n{n-\alpha}}
\]
for all $t>0$.
If $b\in{\rm BMO}({\mathbb R}^n)$, then
\[
|\{x\in{\mathbb R}^n\,:\,||b,I_\alpha|f(x)|>1\}|
\lesssim
\Psi\left(
\int_{{\mathbb R}^n}\Phi(\|b\|_{\rm BMO}|f(x)|)\,{\rm d}x
\right)
\]
for all $f\in L\log L({\mathbb R}^n)$.
\end{theorem}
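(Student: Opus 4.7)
The plan is to follow the Calder\'{o}n--Zygmund decomposition scheme of Cruz-Uribe and Fiorenza~\cite{CrFi03} for $[b,I_\alpha]$, carefully verifying that each step still works for the positive-kernel operator $|b,I_\alpha|$ once the absence of cancellation is compensated by an Orlicz--H\"{o}lder pairing. By homogeneity in $b$, I may assume $\|b\|_{\mathrm{BMO}}=1$; set $F:=\int_{\mathbb R^n}\Phi(|f|)\,\mathrm{d}x$, and the task reduces to showing $|\{x\,:\,|b,I_\alpha|f(x)>1\}|\lesssim\Psi(F)$.

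First I apply a Calder\'{o}n--Zygmund decomposition of $\Phi(|f|)\in L^1(\mathbb R^n)$ at a level $\mu>0$ to be chosen, producing disjoint dyadic cubes $\{Q_j\}$ with $\mu<|Q_j|^{-1}\int_{Q_j}\Phi(|f|)\,\mathrm{d}x\le 2^n\mu$ and $\Phi(|f(x)|)\le\mu$ a.e.\ on $\Omega^c$, where $\Omega:=\bigcup_j Q_j$. Thus $|\Omega|\le F/\mu$, and I split $f=g+h$ with $g:=f\chi_{\Omega^c}+\sum_j f_{Q_j}\chi_{Q_j}$ and $h_j:=(f-f_{Q_j})\chi_{Q_j}$. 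The doubled exceptional set $\widetilde\Omega:=\bigcup_j 2Q_j$ still satisfies $|\widetilde\Omega|\lesssim F/\mu$.

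The good part $g$ satisfies $|g|\lesssim\Phi^{-1}(\mu)$ pointwise, so $g\in L^p$ for a suitable $p$ in the scaling range $1/s=1/p-\alpha/n$; the $L^p$--$L^s$ bound for $|b,I_\alpha|$, which, by a dyadic annular decomposition together with John--Nirenberg, inherits the Chanillo-type bound from $[b,I_\alpha]$, combined with Chebyshev yields $|\{x\,:\,|b,I_\alpha|g(x)>1/2\}|\lesssim\Psi(F)$ after optimization in $\mu$. For the bad part on $x\notin\widetilde\Omega$, I dominate
\[
|b,I_\alpha|h(x)\le\sum_j\int_{Q_j}\frac{|b(x)-b(y)|}{|x-y|^{n-\alpha}}|f(y)-f_{Q_j}|\,\mathrm{d}y,
\]
split $|b(x)-b(y)|\le|b(x)-b_{Q_j}|+|b(y)-b_{Q_j}|$, and estimate each piece via Orlicz--H\"{o}lder with the conjugate pair $(L\log L,\exp L)$. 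The contribution of $|b(y)-b_{Q_j}|$ is summed against $|f-f_{Q_j}|$ on $Q_j$ using John--Nirenberg's $\|b-b_{Q_j}\|_{\exp L,Q_j}\lesssim 1$ together with $\|f-f_{Q_j}\|_{L\log L,Q_j}\lesssim\Phi^{-1}(2^n\mu)$, while the contribution of $|b(x)-b_{Q_j}|$ is absorbed by the Orlicz fractional maximal operator $M_{\alpha,L\log L}$ controlled by Theorem~\ref{thm:220312-H1}. On $\widetilde\Omega$ itself, the trivial bound $|\widetilde\Omega|\lesssim F/\mu$ suffices. Optimizing $\mu$ so that the two contributions balance at $\Psi(F)$ (which is exactly the relation $\Psi^{-1}(\Psi(F))\cdot\log(3+\Psi(F)^{\alpha/n})\sim F^{(n-\alpha)/n}$ built into the definitions of $\Phi$ and $\Psi$) then yields the claim.

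The main obstacle is the loss of cancellation inherent to the positive-kernel operator: the mean-zero property $\int h_j=0$ cannot be exploited once both $b(x)-b(y)$ and $f-f_{Q_j}$ have been put inside absolute values, so the standard trick of extracting one extra factor of $|y-c_{Q_j}|/|x-c_{Q_j}|$ on each annulus is unavailable. The way around this is to work cube-by-cube via the $\exp L$--$L\log L$ duality, thereby trading the missing first-order cancellation for John--Nirenberg's exponential control of $b-b_{Q_j}$; the Whitney disjointness of $\{Q_j\}$ then supplies enough summability to close both contributions without any smoothness of the fractional kernel.
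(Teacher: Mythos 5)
The paper itself contains no proof of this theorem: it is stated with the remark ``Similar to the proof of Theorem 1.1 in \cite{CrFi03}'' and nothing more, so there is no ``paper's own proof'' to compare against beyond that citation. Your Calder\'on--Zygmund-decomposition scheme is indeed the right ballpark for a Cruz-Uribe--Fiorenza-type argument, but two steps in your sketch do not hold up.

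First, the estimate $\|f-f_{Q_j}\|_{L\log L,Q_j}\lesssim\Phi^{-1}(2^n\mu)$ is false. From $|Q_j|^{-1}\int_{Q_j}\Phi(|f|)\le 2^n\mu$ the convexity of $\Phi$ only gives $\|f\|_{L\log L,Q_j}\lesssim\max(1,\mu)$, and $\mu$ is genuinely larger than $\Phi^{-1}(\mu)$: take $Q_j=[0,1]$ and $f=N\chi_{[0,1/N]}$, for which $\mu\sim\log N$ while $\|f\|_{L\log L,Q_j}\sim\log N\gg\Phi^{-1}(\log N)\sim\log N/\log\log N$. Fortunately the weaker bound $\lesssim\mu$ still closes the books, since that piece is then dominated by $\mu\,I_\alpha(\chi_\Omega)(x)$ and $(\mu|\Omega|)^{n/(n-\alpha)}\lesssim F^{n/(n-\alpha)}\lesssim\Psi(F)$; but the bound as written is wrong. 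You also invoke Theorem~\ref{thm:220312-H1} for $M_{\alpha,L\log L}$, which is an Orlicz--Morrey \emph{norm} inequality and says nothing about the modular $\Psi$; the statement you actually need is Theorem~\ref{thm:CF-M}.

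The more serious gap is the treatment of the $|b(x)-b_{Q_j}|$ piece. You claim it is ``absorbed by $M_{\alpha,L\log L}$'', but for $x\notin\widetilde\Omega$ that piece is $\sum_j|b(x)-b_{Q_j}|\,\mu|Q_j|/|x-c_j|^{n-\alpha}$, and the factor $|b(x)-b_{Q_j}|$ depends on $j$ as well as $x$, so the sum is not $M_{\alpha,L\log L}g(x)$ for any $g$. A naive Chebyshev-in-$L^1$ estimate also fails, because $\int_{(2Q_j)^c}|b(x)-b_{Q_j}|\,|x-c_j|^{-(n-\alpha)}\,{\rm d}x$ diverges. What is actually needed is a dyadic-annulus decomposition around each $Q_j$, the bound $|b(x)-b_{Q_j}|\lesssim|b(x)-b_{2^kQ_j}|+k\|b\|_{\rm BMO}$ on the $k$-th annulus, and the local exponential (or $L^q$ John--Nirenberg) integrability of $b$ to make the resulting series summable in a distributional sense; this is the technical core of the Cruz-Uribe--Fiorenza argument and cannot be waved through. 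Your remark on the loss of the mean-zero cancellation of $h_j$ is a sensible observation, but note that even for $[b,I_\alpha]$ the cancellation trick, which yields the extra factor $\ell(Q_j)/|x-c_j|$, only makes the tail $\int_{(2Q_j)^c}|x-c_j|^{-(n-\alpha+1)}\,{\rm d}x$ converge when $\alpha<1$; so the absence of cancellation is not the only delicacy here, and the $|b(x)-b_{Q_j}|$ term requires the annulus/John--Nirenberg argument in both the signed and unsigned cases.
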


Here, we start the proof of the Theorem \ref{thm:absolute}.
Fix $f\in{\mathcal M}^p_{L\log L}({\mathbb R}^n)$ and $Q\in{\mathcal Q}({\mathbb R}^n)$.
We decompose
\[
f
=
f\chi_{2Q}+f\chi_{{\mathbb R}^n\setminus2Q}
=:
f_1+f_2.
\]

First, we estimate $\||b,I_\alpha|f_1\|_{{\rm W}{\mathcal M}^s_t}$.
To use Lemma \ref{lem:median0}, we comfirm $|b,I_\alpha|f_1\in{\rm W}L^{\Phi_0}({\mathbb R}^n)$ for some Young function $\Phi_0$.
By the subadditivity of $\Phi$ and $\Psi$, the end point estimate for $f_1$ in Theorem \ref{thm:C-U} is rewritten by
\[
|\{x\in{\mathbb R}^n\,:\,||b,I_\alpha|f_1(x)|>t\}|
\lesssim
\Psi\circ\Phi\left(\frac1t\right)\cdot
\Psi\left(
\int_{{\mathbb R}^n}\Phi(\|b\|_{\rm BMO}|f_1(x)|)\,{\rm d}x
\right)
\]
for all $t>0$.
Here, we can use the following elemental lemma for the function $\Psi\circ\Phi(t)$ (see Appendix \ref{App:elemental} for the proof of this lemma).
 
\begin{lemma}\label{lem:elemental}
Let $0<\alpha<n$, and let $\Phi$ and $\Psi$ be Young functions defined in Theorem {\rm \ref{thm:C-U}}.
Addtionally, set
\[
\Psi_0(t)
\equiv
\left(\frac t{\log^2(3+1/t)}\right)^{\frac n{n-\alpha}},
\quad t>0.
\]
Then the following assertions hold.
\begin{itemize}
\item[{\rm (1)}] $
\Psi\circ\Phi(t)
\lesssim
\left(t\log^2(3+t)\right)^{n/(n-\alpha)}
$.
\item[{\rm (2)}] There exists a Young function $\Psi_1\in\nabla_2$ such that $\Psi_1(t)\lesssim\Psi_0(t)$.
\end{itemize}
\end{lemma}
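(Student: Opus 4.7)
The plan is to treat the two parts separately. Part (1) is a direct computation: unfolding the definitions gives
\[
\Psi\circ\Phi(t) = \bigl(t\log(3+t)\cdot\log(3+(t\log(3+t))^{\alpha/n})\bigr)^{n/(n-\alpha)},
\]
so the claim reduces to $\log(3+(t\log(3+t))^{\alpha/n})\lesssim\log(3+t)$ uniformly in $t>0$. For $t\in(0,1]$ both sides are trapped between positive constants. For $t\ge 1$ the estimate $\log(3+t)\lesssim t$ gives $(t\log(3+t))^{\alpha/n}\lesssim t^{2\alpha/n}$, and since $2\alpha/n<2$, one checks $\log(3+t^{2\alpha/n})\lesssim\log(3+t)$ by comparing $3+t^{2\alpha/n}$ with a power of $3+t$. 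Raising to the $n/(n-\alpha)$ power finishes (1).

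For (2), set $p:=n/(n-\alpha)>1$ and define
\[
\Psi_1(t) \equiv \int_0^t \frac{\Psi_0(s)}{s}\,{\rm d}s, \quad t>0.
\]
I would first verify that $\Psi_0(s)/s=s^{p-1}\log^{-2p}(3+1/s)$ is strictly increasing on $(0,\infty)$ via a routine logarithmic derivative, in which both the $s^{p-1}$ factor (because $p>1$) and the $\log^{-2p}(3+1/s)$ factor (because $\log(3+1/s)$ decreases in $s$) contribute positively. Consequently $\Psi_1'(t)=\Psi_0(t)/t$ is nonnegative and increasing, so $\Psi_1$ is convex with $\Psi_1(0)=0$, hence a Young function; convergence at $s=0$ is automatic since $s^{p-1}\to 0$. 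The bound $\Psi_1\le\Psi_0$ follows from the same monotonicity:
\[
\Psi_1(t) = \int_0^t \frac{\Psi_0(s)}{s}\,{\rm d}s \le t\cdot\frac{\Psi_0(t)}{t} = \Psi_0(t).
\]

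For the $\nabla_2$ condition, the key pointwise estimate is
\[
\frac{\Psi_0(ks)}{\Psi_0(s)} = k^p\left(\frac{\log(3+1/s)}{\log(3+1/(ks))}\right)^{2p}\ge k^p,
\]
valid for any $k\ge 1$ and $s>0$, since $1/(ks)\le 1/s$ makes the log-ratio at least $1$. Substituting $s=ku$ in the integral defining $\Psi_1(kt)$ then yields $\Psi_1(kt)\ge k^p\Psi_1(t)$, and choosing $k$ with $k^{p-1}\ge 2$ gives $\Psi_1(kt)\ge 2k\Psi_1(t)$, confirming $\Psi_1\in\nabla_2$. I expect the main obstacle to be selecting a workable $\Psi_1$: any pure power $t^q$ fails the comparison $\Psi_1\lesssim\Psi_0$ at one of the endpoints $t\to 0^+$ or $t\to\infty$ because of the $\log^{-2p}(3+1/t)$ factor in $\Psi_0$, and taking $\Psi_0$ itself would force a delicate verification of convexity across a transition region. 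The integral representation bypasses both issues at once, as it automatically produces a Young function inheriting the polynomial $s^{p-1}$ growth of $\Psi_0(s)/s$ which drives $\nabla_2$.
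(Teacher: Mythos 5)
Your proof is correct. Part (1) follows essentially the same route as the paper, namely reducing to a bound on the inner logarithm $\log\bigl(3+\Phi(t)^{\alpha/n}\bigr)\lesssim\log(3+t)$; the paper achieves this via $\Phi(t)^{\alpha/n}\le(3+t)^2$ together with $3+(3+t)^2\le(3+t)^3$, while you split into $t\le1$ and $t\ge1$ and absorb constants, but the mechanism is identical. For part (2), your construction is genuinely different from the paper's. The paper first proves $\Psi_0(t)\gtrsim\min(t,t^3)^{n/(n-\alpha)}$, then hand-builds a continuous piecewise convex function $\varphi_1$ (cubic for $t\le1/\sqrt3$, a shifted linear function beyond) lying below $\min(t,t^3)$, and sets $\Psi_1=\varphi_1^{n/(n-\alpha)}$; the $\nabla_2$ property is then asserted without detail, and verifying it honestly requires a small case analysis across the transition point. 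Your regularization $\Psi_1(t)=\int_0^t\Psi_0(s)/s\,\mathrm{d}s$ is cleaner: the single observation that $\Psi_0(s)/s=s^{p-1}\log^{-2p}(3+1/s)$ is increasing delivers both convexity of $\Psi_1$ and the comparison $\Psi_1\le\Psi_0$ in one stroke, and the scaling estimate $\Psi_0(ks)\ge k^{p}\Psi_0(s)$ for $k\ge1$, read off directly from the formula, transfers to $\Psi_1$ by change of variables and yields $\nabla_2$ with no case splitting. Each approach buys something: the paper's $\Psi_1$ is an explicit piecewise power while yours is only given as an integral, but your argument is shorter, more systematic, and actually proves what the paper merely states.
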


Thus, we see that
\[
\left(\frac t{\log^2(3+1/t)}\right)^{\frac n{n-\alpha}}
|\{x\in{\mathbb R}^n\,:\,|[b,I_\alpha]f_1(x)|>t\}|
\lesssim
\Psi\left(
\int_{{\mathbb R}^n}\Phi(\|b\|_{\rm BMO}|f_1(x)|)\,{\rm d}x
\right).
\]
It follows that $M(|b,I_\alpha|f_1)\in{\rm W}L^{\Psi_1}({\mathbb R}^n)$ for a Young function $\Psi_1\in\nabla_2$ given in Lemma \ref{lem:elemental} by the ${\rm W}L^{\Psi_1}({\mathbb R}^n)$-boundedness of $M$ (see Theorem \ref{thm:M-wLphi-bdd}), and then we can use Lemma \ref{lem:median0}, and obtain
\[
\lim_{l\to\infty}m_{|b,I_\alpha|f_1}(2^lQ)=0
\]
for all $Q\in{\mathcal Q}({\mathbb R}^n)$.
Combining Corollary \ref{cor:sharp-maximal}, we have
\[
\||b,I_\alpha|f_1\|_{{\rm W}{\mathcal M}^s_t}
\sim
\left\|M_\lambda^{\sharp,d}(|b,I_\alpha|f_1)\right\|_{{\rm W}{\mathcal M}^s_t}.
\]
Additionally, we use the following theorem.

\begin{theorem}\label{thm:sharp-pointwise}
Let $0<\alpha<n$, $1<p<\infty$ and $\eta>1$.
If
\[
b\in{\rm BMO}({\mathbb R}^n),
\quad
\frac1p-\frac\alpha n>0,
\]
then
\[
(|b,I_\alpha|f)^\sharp(x)
\lesssim
\|b\|_{\rm BMO}
\left(
M((I_\alpha f)^\eta)(x)^{\frac1\eta}+M_{\alpha,L\log L}f(x)
\right),
\quad x\in{\mathbb R}^n
\]
for all nonnegative functions $f\in{\mathcal M}^p_{L\log L}({\mathbb R}^n)$.
\end{theorem}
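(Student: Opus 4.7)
Following the standard sharp-function recipe, I would fix $x \in {\mathbb R}^n$ and a cube $Q \ni x$, and bound $\frac{1}{|Q|}\int_Q \bigl||b,I_\alpha|f(y) - c\bigr|\,{\rm d}y$ by the right-hand side of the claim for a suitably chosen constant $c = c(Q)$; the conclusion then follows from the elementary inequality $(|b,I_\alpha|f)^\sharp(x) \le 2\sup_{Q \ni x}\inf_{c} \frac{1}{|Q|}\int_Q \bigl||b,I_\alpha|f(y) - c\bigr|\,{\rm d}y$. Decompose $f = f_1 + f_2$ with $f_1 := f\chi_{2Q}$ and take $c := \frac{1}{|Q|}\int_Q |b,I_\alpha|f_2(y')\,{\rm d}y'$. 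Since $|b,I_\alpha|f_1(y) \ge 0$, this produces the decomposition
\[
\frac{1}{|Q|}\int_Q \bigl||b,I_\alpha|f(y)-c\bigr|\,{\rm d}y \le \frac{1}{|Q|}\int_Q |b,I_\alpha|f_1(y)\,{\rm d}y + \frac{1}{|Q|^2}\iint_{Q\times Q} \bigl||b,I_\alpha|f_2(y)-|b,I_\alpha|f_2(y')\bigr|\,{\rm d}y\,{\rm d}y',
\]
separating the estimate into a local and a far contribution.

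For the local term, the triangle inequality $|b(y)-b(z)|\le |b(y)-b_{2Q}|+|b(z)-b_{2Q}|$ applied inside the defining integral of $|b,I_\alpha|f_1(y)$ yields the pointwise bound $|b,I_\alpha|f_1(y) \le |b(y)-b_{2Q}|\,I_\alpha(f_1)(y) + I_\alpha(|b-b_{2Q}|f_1)(y)$. Averaging over $Q$, H\"older's inequality with exponents $(\eta',\eta)$ combined with the John--Nirenberg inequality controls the first summand by $\|b\|_{\rm BMO}M((I_\alpha f)^\eta)(x)^{1/\eta}$ (using $I_\alpha f_1 \le I_\alpha f$ since $f \ge 0$). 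For the second summand, Fubini together with the elementary estimate $\int_Q |y-z|^{-(n-\alpha)}\,{\rm d}y \lesssim \ell(Q)^\alpha$ for $z \in 2Q$ reduces matters to $\ell(Q)^\alpha \cdot \frac{1}{|2Q|}\int_{2Q}|b-b_{2Q}|f\,{\rm d}z$, which the generalised H\"older inequality between $\exp L$ and $L\log L$, followed by John--Nirenberg, controls by $\|b\|_{\rm BMO} M_{\alpha, L\log L}f(x)$.

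The main obstacle lies in the far term: the absolute value inside $|b, I_\alpha|$ forbids the classical manoeuvre of subtracting $I_\alpha((b - b_{2Q})f_2)$ evaluated at a fixed centre and invoking only fractional kernel smoothness, as one does for $[b,I_\alpha]$. I circumvent this via the double-averaging displayed above, combined with the reverse triangle inequality $\bigl||b(y)-b(z)|-|b(y')-b(z)|\bigr|\le |b(y)-b(y')|$ and the fractional kernel smoothness $\bigl|1/|y-z|^{n-\alpha}-1/|y'-z|^{n-\alpha}\bigr| \lesssim \ell(Q)/|x-z|^{n-\alpha+1}$ (valid for $y,y' \in Q$ and $z \in (2Q)^c$), which together produce the pointwise integrand bound
\[
\left|\frac{|b(y)-b(z)|}{|y-z|^{n-\alpha}} - \frac{|b(y')-b(z)|}{|y'-z|^{n-\alpha}}\right|
\lesssim \bigl(|b(y)-b_{2Q}|+|b(z)-b_{2Q}|\bigr)\frac{\ell(Q)}{|x-z|^{n-\alpha+1}} + \frac{|b(y)-b(y')|}{|x-z|^{n-\alpha}}.
\]

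Integrating $z \in (2Q)^c$ along the dyadic annuli $2^{k+1}Q \setminus 2^k Q$ and averaging in $(y,y') \in Q\times Q$: the prefactor becomes $\ell(Q)\cdot|2^{k+1}Q|/(2^k\ell(Q))^{n-\alpha+1} \sim 2^{k(\alpha-1)}\ell(Q)^\alpha$; combined with generalised H\"older using $\|b-b_{2Q}\|_{\exp L, 2^{k+1}Q}\lesssim (1+k)\|b\|_{\rm BMO}$ and the absorption $\ell(Q)^\alpha\|f\|_{L\log L, 2^{k+1}Q}\lesssim 2^{-(k+1)\alpha}M_{\alpha,L\log L}f(x)$, the $2^{k\alpha}$ growth cancels and leaves the convergent series $\sum_k (1+k)2^{-k}$, yielding a contribution $\lesssim \|b\|_{\rm BMO} M_{\alpha,L\log L}f(x)$ for the first two pieces. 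For the third piece, $\int_{(2Q)^c}f(z)/|x-z|^{n-\alpha}\,{\rm d}z \le I_\alpha f(x) \le M((I_\alpha f)^\eta)(x)^{1/\eta}$ together with $\frac{1}{|Q|^2}\iint_{Q\times Q}|b(y)-b(y')|\,{\rm d}y\,{\rm d}y' \lesssim \|b\|_{\rm BMO}$ gives $\lesssim \|b\|_{\rm BMO}M((I_\alpha f)^\eta)(x)^{1/\eta}$. Summing all contributions produces the desired pointwise bound on $(|b,I_\alpha|f)^\sharp(x)$.
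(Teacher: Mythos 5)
Your proof is correct but takes a genuinely different route from the paper's. The paper fixes $c_Q \equiv I_\alpha(|b-b_{2Q}|f_2)(x)$ and applies the reverse triangle inequality once, globally, to write $||b,I_\alpha|f(y)-c_Q| \le F_1(y)+F_2(y)+F_3(y)$ with $F_1(y)=|b(y)-b_{2Q}|I_\alpha f(y)$, $F_2(y)=I_\alpha(|b-b_{2Q}|f_1)(y)$, $F_3(y)=|I_\alpha(|b-b_{2Q}|f_2)(y)-c_Q|$; the kernel-smoothness estimate is then applied only to $F_3$, which carries the single weight $|b(z)-b_{2Q}|$, after a preliminary check that $c_Q<\infty$ (using $1/p-\alpha/n>0$). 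You instead take $c$ to be the $Q$-average of $|b,I_\alpha|f_2$ and double-average, applying the reverse triangle inequality inside the kernel-smoothness estimate; this produces the extra piece $|b(y)-b(y')|/|x-z|^{n-\alpha}$, which you absorb via $\frac1{|Q|^2}\iint_{Q\times Q}|b(y)-b(y')|\,{\rm d}y\,{\rm d}y'\lesssim\|b\|_{\rm BMO}$ and $I_\alpha f(x)\lesssim M((I_\alpha f)^\eta)(x)^{1/\eta}$. You also handle the local $I_\alpha(|b-b_{2Q}|f_1)$ term by Fubini and $\int_Q|y-z|^{\alpha-n}\,{\rm d}y\lesssim\ell(Q)^\alpha$ for $z\in2Q$, which is more elementary than the paper's route through the weak-$(1,n/(n-\alpha))$ bound for $I_\alpha$ and the normability of ${\rm W}L^{n/(n-\alpha)}$. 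Two small points to tighten: as with the paper's $c_Q$, you should verify that your $c$ is finite (the same dyadic bound controlling the far term gives this); and $I_\alpha f(x)\le M((I_\alpha f)^\eta)(x)^{1/\eta}$ holds only a.e.\ via Lebesgue differentiation, whereas the everywhere-valid substitute $\int_{(2Q)^c}\frac{f(z)}{|x-z|^{n-\alpha}}\,{\rm d}z\lesssim\inf_{w\in Q}I_\alpha f(w)\le\big(\frac1{|Q|}\int_Q(I_\alpha f)^\eta\big)^{1/\eta}\le M((I_\alpha f)^\eta)(x)^{1/\eta}$ avoids the issue.
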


Although this theorem can be obtain as the similar proof of Theorem 1.3 in \cite{CrFi03}, we give the proof of it in Appendix \ref{app:sharp-pointwise-proof} jast in case.

This theorem is given as the proof of Theorem 1.3 in \cite{CrFi03}.
Hence,
\begin{align*}
\||b,I_\alpha|f_1\|_{{\rm W}{\mathcal M}^s_t}
&\lesssim
\left\|M^{\sharp,d}_\lambda(|b,I_\alpha|f_1)\right\|_{{\rm W}{\mathcal M}^s_t}
\lesssim
\|b\|_{\rm BMO}
\left(
\|I_\alpha(|f_1|)\|_{{\rm W}{\mathcal M}^s_t}
+
\left\|M_{\alpha,L\log L}f_1\right\|_{{\rm W}{\mathcal M}^s_t}
\right)\\
&\lesssim
\|f\|_{{\mathcal M}^p_{L\log L}}.
\end{align*}

Second, we estimate $\||b,I_\alpha|f_2\|_{{\rm W}{\mathcal M}^s_t}$.
We may use the characterization for the ${\rm BMO}({\mathbb R}^n)$ functional, as follows.

\begin{lemma}[{\cite[Lemma 5.1]{CrFi03}}]\label{lem:bmo}
Let $1\le p<\infty$.
Then the following assrtions hold for $b\in L_{\rm loc}^1({\mathbb R}^n)$.
\begin{itemize}
\item[{\rm (1)}] $\displaystyle
\|b\|_{\rm BMO}
\sim
\sup_{Q\in{\mathcal Q}}
\left(\frac1{|Q|}\int_Q|b(x)-b_Q|^p\,{\rm d}x\right)^{\frac1p}
$.
\item[{\rm (2)}] $\displaystyle
\|b\|_{\rm BMO}
\sim
\sup_{Q\in{\mathcal Q}}
\|b-b_Q\|_{\exp(L),Q}
$.
\item[{\rm (3)}] For all $j\in{\mathbb N}$,
\[
\|b-b_Q\|_{\exp(L),2^jQ}
\lesssim
j\|b\|_{\rm BMO}.
\]
\end{itemize}
\end{lemma}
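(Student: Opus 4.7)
The plan is to deduce all three statements from the John--Nirenberg inequality, which provides the exponential tail estimate
\[
|\{x \in Q : |b(x) - b_Q| > t\}| \lesssim |Q|\exp\!\left(-\frac{c\, t}{\|b\|_{\rm BMO}}\right)
\]
for all $Q \in \mathcal{Q}(\mathbb{R}^n)$, $b\in{\rm BMO}(\mathbb{R}^n)$, and $t>0$, with absolute constants. Given this tail bound, (1), (2), and (3) all follow from standard distributional and layer-cake manipulations.

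For (1), the direction in which the $L^p$-average dominates $\|b\|_{\rm BMO}$ is immediate from Jensen's inequality, since $p \ge 1$. For the reverse direction I would express
\[
\frac{1}{|Q|}\int_Q |b(x)-b_Q|^p\,dx = p\int_0^\infty t^{p-1}\,\frac{|\{x\in Q:|b(x)-b_Q|>t\}|}{|Q|}\,dt
\]
and insert the John--Nirenberg tail to reach an estimate of the form $\lesssim \Gamma(p+1)\|b\|_{\rm BMO}^p$, from which the claim follows by taking $p$-th roots and suprema over $Q$.

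Part (2) runs in the same spirit. The trivial direction uses $e^{t}-1\ge t$: whenever $\lambda > \|b-b_Q\|_{\exp(L),Q}$, $|Q|^{-1}\int_Q|b-b_Q|\,dx \le \lambda$, hence $\|b\|_{\rm BMO} \lesssim \sup_Q\|b-b_Q\|_{\exp(L),Q}$. For the opposite inequality I would apply the layer-cake decomposition to
\[
\frac{1}{|Q|}\int_Q\left(e^{|b(x)-b_Q|/\lambda}-1\right)dx
\]
and substitute the John--Nirenberg bound; choosing $\lambda$ to be a sufficiently large multiple of $\|b\|_{\rm BMO}$ (larger than $1/c$) makes the resulting integral converge and drops it below $1$, certifying $\|b-b_Q\|_{\exp(L),Q}\lesssim \|b\|_{\rm BMO}$.

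For (3), the plan is to use the triangle inequality for the Luxembourg norm:
\[
\|b-b_Q\|_{\exp(L),2^jQ} \le \|b-b_{2^jQ}\|_{\exp(L),2^jQ} + \|b_{2^jQ}-b_Q\|_{\exp(L),2^jQ}.
\]
The first summand is $\lesssim\|b\|_{\rm BMO}$ by (2) applied to the cube $2^jQ$. For the second, the integrand is the constant $c=b_{2^jQ}-b_Q$, so the Luxembourg norm collapses to $|c|/\log 2$, and a telescoping chain
\[
|b_{2^jQ}-b_Q| \le \sum_{k=0}^{j-1}|b_{2^{k+1}Q}-b_{2^kQ}| \lesssim j\,\|b\|_{\rm BMO},
\]
with each step bounded via the standard averaging comparison $|b_{2^{k+1}Q}-b_{2^kQ}|\le 2^n|2^{k+1}Q|^{-1}\int_{2^{k+1}Q}|b-b_{2^{k+1}Q}|\,dx$, completes the argument. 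No individual step poses a serious obstacle once John--Nirenberg is in hand; the main care lies in tracking the constants in the layer-cake computations of (1)--(2) to ensure that the resulting modular stays below $1$.
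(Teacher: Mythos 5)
Your proposal is correct: the paper itself does not prove this lemma but merely cites it from Cruz--Uribe and Fiorenza, and your John--Nirenberg/layer-cake argument for (1) and (2), together with the Luxemburg-norm triangle inequality and the telescoping estimate $|b_{2^{k+1}Q}-b_{2^kQ}|\le 2^n\|b\|_{\rm BMO}$ for (3), is precisely the standard proof underlying that reference. All the details you sketch check out, including the observation that the $\exp(L)$-norm of a constant $c$ on any cube equals $|c|/\log 2$.
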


Fix $x\in Q$.
We decompose
\begin{align*}
||b,I_\alpha|f_2(x)|
&\le
\int_{{\mathbb R}^n\setminus2Q}\frac{|b(x)-b_Q|}{|x-y|^{n-\alpha}}|f(y)|\,{\rm d}y
+
\int_{{\mathbb R}^n\setminus2Q}\frac{|b(y)-b_Q|}{|x-y|^{n-\alpha}}|f(y)|\,{\rm d}y\\
&\equiv
F_1(x)+F_2(x).
\end{align*}
In $F_1(x)$,
\begin{align*}
F(x)
&\lesssim
|b(x)-b_Q|\sum_{j=1}^\infty
\frac1{\ell(2^{j+1}Q)^{n-\alpha}}\int_{2^{j+1}Q\setminus2^jQ}|f(y)|\,{\rm d}y\\
&\le
|b(x)-b_Q|\sum_{j=1}^\infty
\ell(2^{j+1}Q)^\alpha\|f\|_{{\mathcal M}^p_1}\cdot|2^{j+1}Q|^{-\frac1p}\\
&\sim
|b(x)-b_Q|\|f\|_{{\mathcal M}^p_1}\cdot|Q|^{-\frac1s}.
\end{align*}
It follows that
\begin{align}\label{eq:F1}
\begin{split}
|Q|^{\frac1s-\frac1t}\|F_1\chi_Q\|_{{\rm W}L^t}
&\lesssim
\frac{\|(b-b_Q)\chi_Q\|_{{\rm W}L^t}}{|Q|^{\frac1t}}\|f\|_{{\mathcal M}^p_1}
\lesssim
\frac{\|(b-b_Q)\chi_Q\|_{L^t}}{|Q|^{\frac1t}}\|f\|_{{\mathcal M}^p_1}\\
&\lesssim
\|b\|_{{\rm BMO}}\|f\|_{{\mathcal M}^p_1},
\end{split}
\end{align}
where in the final inequality we used Lemma \ref{lem:bmo}.
In $F_2(x)$, using the $L\log L$-$\exp(L)$ duality for the probability measure, we have
\begin{align*}
F_2(x)
&\lesssim
\sum_{j=1}^\infty\ell(2^{j+1}Q)^\alpha
\|b-b_Q\|_{\exp(L),2^{j+1}Q}\|f\|_{L\log L,2^{j+1}Q}\\
&\le
\sum_{j=1}^\infty\ell(2^{j+1}Q)^\alpha
(j+1)\|b\|_{\rm BMO}
\|f\|_{{\mathcal M}^p_{L\log L}}\cdot|2^{j+1}Q|^{-\frac1p}\\
&\sim
\|b\|_{\rm BMO}\|f\|_{{\mathcal M}^p_{L\log L}}\cdot|Q|^{-\frac1s},
\end{align*}
where in the second inequality we used Lemma \ref{lem:bmo}.
It follows that
\begin{equation}\label{eq:F2}
|Q|^{\frac1s-\frac1t}\|F_2\|_{{\rm W}L^t}
\lesssim
\|b\|_{\rm BMO}\|f\|_{{\mathcal M}^p_{L\log L}}.
\end{equation}
Gathering these estimates \eqref{eq:F1} and \eqref{eq:F2}, and the embedding ${\mathcal M}^p_1({\mathbb R}^n)\hookleftarrow{\mathcal M}^p_{L\log L}({\mathbb R}^n)$, we obtain
\begin{align*}
\||b,I_\alpha|f_2\|_{{\rm W}{\mathcal M}^s_t}
\lesssim
\|b\|_{\rm BMO}\|f\|_{{\mathcal M}^p_1}
+
\|b\|_{\rm BMO}\|f\|_{{\mathcal M}^p_{L\log L}}
\lesssim
\|b\|_{\rm BMO}\|f\|_{{\mathcal M}^p_{L\log L}},
\end{align*}
as desired.

\section{Proof of Theorem \ref{thm:main}}\label{s:proof}

In this section, we prove the main theorem.

At first, we start the proof of the \lq\lq only if'' part.
We take any $Q\in{\mathcal Q}({\mathbb R}^n)$.
By Proposition \ref{prop:normable-wLp} and Theorem \ref{thm:absolute}, $b\in{\rm BMO}({\mathbb R}^n)$ implies
\begin{align*}
\int_Q\int_{{\mathbb R}^n}\frac{|b(x)-b(y)|}{|x-y|^{n-\alpha}}|f(y)|\,{\rm d}y\,{\rm d}x
\lesssim
|Q|^{-\frac1s+\frac1t}\||b,I_\alpha|f\|_{{\rm W}{\mathcal M}^s_t}
\lesssim
|Q|^{-\frac1s+\frac1t}\|f\|_{{\mathcal M}^p_{L\log L}}
\end{align*}
for all $f\in{\mathcal M}^p_{L\log L}({\mathbb R}^n)$.
Then
\[
[b,I_\alpha]f(x)
=
\int_{{\mathbb R}^n}\frac{b(x)-b(y)}{|x-y|^{n-\alpha}}f(y)\,{\rm d}y
\]
converges a.e. $x\in{\mathbb R}^n$, absolutely.
In addition, using Theorem \ref{thm:absolute}, we obtain
\begin{align*}
\|[b,I_\alpha]f\|_{{\rm W}{\mathcal M}^s_t}
\le
\||b,I_\alpha|(|f|)\|_{{\rm W}{\mathcal M}^s_t}
\lesssim
\|f\|_{{\mathcal M}^p_{L\log L}}.
\end{align*}

Next, we show the \lq \lq if'' part.
We follow the idea of the paper \cite{Janson78} to prove the estimate
\begin{equation}\label{eq:if part}
\|b\|_{\rm BMO}
\lesssim
\|[b,I_\alpha]\|_{{\mathcal M}^p_{L\log L}\to{\rm W}{\mathcal M^s_t}}.
\end{equation}
Since the function $|z|^{n-\alpha}$ is infinitely differentiable in an open set, we can expand the absolutely convergent Fourier series
\[
|z|^{n-\alpha}
=
\sum_{j=1}^\infty a_je^{iv_j\cdot z}
\]
on $Q_0\in{\mathcal Q}({\mathbb R}^n)$ with $Q_0\not\ni0$ and $|Q_0|=1$ (cf. \cite[Corollary 3.3.10 (a)]{Grafakos14}).
In fact, we may construct a smooth $2Q_0$-periodic function such that 
$$
\chi_{Q_0}(z)|z|^{n-\alpha}
\le
\rho(z)
\le
\chi_{2Q_0}(z)|z|^{n-\alpha}
$$ 
for all $z\in 2Q_0$
and expand $\rho$
into
the absolutely convergent Fourier series on $2Q_0$, and restrict its expansion 
Here, the exact form of the vectors $\{v_j\}_{j=1}^\infty$ is irrelevant.
In particular, we remark that $\sum_{j=1}^\infty|a_j|<\infty$.
Here, for any $Q\in{\mathcal Q}({\mathbb R}^n)$, define
\[
R_Q
\equiv
Q-\ell(Q)Q_0
=
\left\{
x-\ell(Q)z
\,:\,
x\in Q,\,z\in Q_0
\right\}.
\]
Then, since $(x-y)/\ell(Q)\in Q_0$ for all $x\in Q$ and $y\in R_Q$, we can rewrite
\begin{align*}
\int_Q|b(x)-b_{R_Q}|\,{\rm d}x
&=
\int_Q(b(x)-b_{R_Q})s(x)\,{\rm d}x\\
&=
\frac1{|R_Q|}\int_Q\int_{R_Q}(b(x)-b(y))s(x)\,{\rm d}y\,{\rm d}x\\
&=
\frac1{|R_Q|}\int_Q\int_{R_Q}
(b(x)-b(y))
\left(\frac{|x-y|}{\ell(Q)}\right)^{-(n-\alpha)}
\sum_{j=1}^\infty a_je^{iv_j\cdot\frac{x-y}{\ell(Q)}}
s(x)
\,{\rm d}y\,{\rm d}x,
\end{align*}
where we wrote $s\equiv{\rm sgn}(b-m_{R_Q}(b))$.
Note that
\begin{align*}
\int_Q\int_{R_Q}
\frac{|b(x)-b(y)|}{|x-y|^{n-\alpha}}\sum_{j=1}^\infty|a_j|
\,{\rm d}y\,{\rm d}x
&=
\sum_{j=1}^\infty|a_j|
\int_Q|b,I_\alpha|\chi_{R_Q}(x)\,{\rm d}y\,{\rm d}x\\
&\lesssim
\sum_{j=1}^\infty|a_j|
\||b,I_\alpha|\chi_{R_Q}\|_{{\rm W}{\mathcal M}^s_t}\cdot|Q|^{1-\frac1s}\\
&\lesssim
\sum_{j=1}^\infty|a_j|
\|\chi_{R_Q}\|_{{\mathcal M}^p_{L\log L}}\cdot|Q|^{1-\frac1s}\\
&\sim
\sum_{j=1}^\infty|a_j|
\cdot
|Q|^{1+\frac\alpha n}
<\infty,
\end{align*}
where in the second inequality, we used Theorem \ref{thm:absolute}.
Hence, setting
\begin{equation*}
g_j(y)\equiv e^{-iv_j\cdot\frac y{\ell(Q)}}\chi_{R_Q}(y),
\quad
h_j(x)\equiv e^{iv_j\cdot\frac x{\ell(Q)}}s(x)\chi_Q(x),
\end{equation*}
by Fubini's theorem, we have
\begin{align*}
\int_Q|b(x)-b_{R_Q}|\,{\rm d}x
&\le
\ell(Q)^{-\alpha}
\sum_{j=1}^\infty|a_j|
\int_{{\mathbb R}^n}|[b,I_\alpha]g_j(x)||h_j(x)|\,{\rm d}x\\
&=
\ell(Q)^{-\alpha}
\sum_{j=1}^\infty|a_j|
\int_Q|[b,I_\alpha]g_j(x)|\,{\rm d}x\\
&\le
\ell(Q)^{-\alpha}
\sum_{j=1}^\infty|a_j|\cdot|Q|^{1-\frac1s}\|[b,I_\alpha]g_j\|_{{\mathcal M}^s_t}\\
&\lesssim
\ell(Q)^{-\alpha}
\|[b,I_\alpha]\|_{{\mathcal M}^p_{L\log L}\to{\rm W}{\mathcal M}^s_t}
\cdot|Q|^{1-\frac1s}
\sum_{j=1}^\infty|a_j|\|g_j\|_{{\mathcal M}^p_{L\log L}}\\
&\sim
\|[b,I_\alpha]\|_{{\mathcal M}^p_{L\log L}\to{\rm W}{\mathcal M}^s_t}
\cdot|Q|.
\end{align*}
Consequently, using the well-known fact
\[
\|b\|_{\rm BMO}
\sim
\sup_{Q\in{\mathcal Q}}\inf_{c\in{\mathbb C}}
\frac1{|Q|}\int_Q|b(x)-c|\,{\rm d}x,
\]
we obtain estimate \eqref{eq:if part}.

\appendix
\section{Elemental statements for the sharp maximal function and the median}

The definitions of the median $m_f(Q)$ and the sharp maximal function $M^{\sharp,d}_\lambda f(x)$ is mentioned in Subsection \ref{ss:sharp}.
Then, the following lemma is known for these notation.

\begin{lemma}\label{app:sharp}
For $f\in L_{\rm loc}^1({\mathbb R}^n)$ and $Q\in{\mathcal Q}({\mathbb R}^n)$, the following assrtions holds{\rm :}
\begin{itemize}
\item[{\rm (1)}] $(f\chi_Q)^\ast(\lambda|Q|)\le\lambda^{-1}|f|_Q$ for any $\lambda>0$.
\item[{\rm (2)}] \cite[Lemma 3.2]{Hytonen11} $|m_f(Q)|\le(f\chi_Q)^\ast(\lambda|Q|)$ for any $\lambda\in(0,2^{-1})$.
\item[{\rm (3)}] \cite[Proof of Proposition 3]{NaSa17} $M_\lambda^{\sharp,d}f(x)\lesssim M(|f|^\eta)(x)^{\frac1\eta}$ for any $\eta>0$.
\end{itemize}
\end{lemma}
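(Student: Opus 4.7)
The plan is to prove the three assertions sequentially, with part (1) serving as the main tool for part (3), and part (2) being essentially a sign analysis based on the definition of the median.

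For part (1), I would apply Chebyshev's inequality directly: for any $\alpha > 0$ we have $|\{x \in Q : |f(x)| > \alpha\}| \leq \alpha^{-1} \int_Q |f|\,\mathrm{d}x = \alpha^{-1}|Q|\cdot|f|_Q$. Choosing $\alpha = \lambda^{-1}|f|_Q$ makes the right-hand side equal to $\lambda|Q|$, and the infimum-definition of the decreasing rearrangement then yields $(f\chi_Q)^*(\lambda|Q|) \leq \lambda^{-1}|f|_Q$.

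For part (2), set $m = m_f(Q)$ and argue by cases on the sign of $m$. If $m \geq 0$, then for any $\alpha < m$ the inclusion $\{x \in Q : f(x) \geq m\} \subset \{x \in Q : |f(x)| > \alpha\}$, combined with $|\{x \in Q : f(x) \geq m\}| = |Q| - |\{x \in Q : f(x) < m\}| \geq |Q|/2 > \lambda|Q|$, shows that $\alpha$ cannot belong to the infimum defining $(f\chi_Q)^*(\lambda|Q|)$; therefore $(f\chi_Q)^*(\lambda|Q|) \geq m = |m|$. The case $m < 0$ is symmetric, using the inclusion $\{x \in Q : f(x) \leq m\} \subset \{x \in Q : |f(x)| > \alpha\}$ for $\alpha < |m|$.

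For part (3), I would exploit the fact that the decreasing rearrangement commutes with positive powers, namely $(|f|^\eta\chi_Q)^*(t) = ((f\chi_Q)^*(t))^\eta$, which follows immediately from the identity $\{|f|^\eta > t\} = \{|f| > t^{1/\eta}\}$. Choosing $c = 0$ in the infimum defining $\omega_\lambda(f;Q)$ gives $\omega_\lambda(f;Q) \leq (f\chi_Q)^*(\lambda|Q|)$. Raising to the $\eta$-th power and applying part (1) to $|f|^\eta$ yields $\omega_\lambda(f;Q)^\eta \leq \lambda^{-1}(|f|^\eta)_Q$. Since $(|f|^\eta)_Q \leq M(|f|^\eta)(x)$ for every $x \in Q$, taking the $\eta$-th root and then the supremum over $Q \in \mathcal{D}(Q_0)$ and $Q_0 \in \mathcal{Q}$ produces the desired pointwise bound $M_\lambda^{\sharp,d} f(x) \leq \lambda^{-1/\eta} M(|f|^\eta)(x)^{1/\eta}$. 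Each step is routine; if anything deserves flagging as the main obstacle, it is the careful separation of strict and non-strict inequalities in part (2), which is needed to relate the definition of the median to the infimum defining the decreasing rearrangement.
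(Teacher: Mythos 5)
Your proof is correct, and for part (1) it is exactly the Chebyshev argument the paper indicates. For parts (2) and (3) the paper itself gives no proof but simply cites \cite[Lemma 3.2]{Hytonen11} and \cite[Proof of Proposition 3]{NaSa17}; your arguments are the standard ones that appear in those references, so you have simply supplied the details the paper omits. One minor remark: in part (3) you should note explicitly that the constant $\lambda^{-1/\eta}$ is admissible because $\lambda$ is a fixed parameter (in the paper's application, $\lambda=\lambda_n=2^{-n-2}$), so it is absorbed into the implicit constant of $\lesssim$; and in part (2) you may want to record the trivial case $m_f(Q)=0$ separately, since your inclusion arguments implicitly assume $|m|>0$ to ensure the range $0<\alpha<|m|$ is nonempty.
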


The statement (1) in this lemma is given by Chebyshev's inequality, easily.

\section{Subadditivity of $\Phi$ and $\Psi$}

\begin{lemma}
Set
\[
\Phi(t)\equiv t\log(3+t), \quad
\Psi(t)\equiv\left(t\log\left(3+t^{\frac\alpha n}\right)\right)^{\frac n{n-\alpha}}.
\]
Then for all $s,t>0$,
\[
\Phi(st)\lesssim\Phi(s)\Phi(t), \quad
\Psi(st)\lesssim\Psi(s)\Psi(t).
\]
\end{lemma}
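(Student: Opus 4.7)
The plan is to reduce both inequalities to a single submultiplicativity estimate for the auxiliary function $u\mapsto\log(3+u)$, namely
\[
\log(3+uv)\lesssim\log(3+u)\log(3+v)\qquad(u,v>0).
\]
For $\Phi$, this is immediate after dividing the desired inequality $\Phi(st)\lesssim\Phi(s)\Phi(t)$ by $st$: it becomes exactly the display above. For $\Psi$, since both sides of $\Psi(st)\lesssim\Psi(s)\Psi(t)$ are positive and the map $x\mapsto x^{n/(n-\alpha)}$ is monotone, it suffices (up to adjusting the implicit constant) to verify the inequality after raising both sides to the power $(n-\alpha)/n$. This produces
\[
st\log(3+(st)^{\alpha/n})\lesssim st\log(3+s^{\alpha/n})\log(3+t^{\alpha/n}),
\]
and cancelling $st$ and substituting $u=s^{\alpha/n}$, $v=t^{\alpha/n}$ yields the very same logarithmic estimate.

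Next, I would verify the reduced estimate in two short steps. First, the elementary algebraic inequality $3+uv\le(3+u)(3+v)$ holds because $(3+u)(3+v)-(3+uv)=9+3u+3v\ge 0$. Taking logarithms gives the additive bound
\[
\log(3+uv)\le\log(3+u)+\log(3+v).
\]
Second, I would convert this into a multiplicative bound using $\log(3+x)\ge\log 3>0$ for all $x\ge 0$: for any $a,b\ge\log 3$ one has $a+b\le 2\max(a,b)\le 2ab/\min(a,b)\le(2/\log 3)\,ab$. Taking $a=\log(3+u)$ and $b=\log(3+v)$ then gives the desired submultiplicativity with explicit constant $2/\log 3$.

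There is essentially no obstacle here; the lemma is entirely elementary. The only observation worth emphasizing is the \emph{clean cancellation} in the $\Psi$ case: both the prefactor $st$ and the outer exponent $n/(n-\alpha)$ drop out, so one logarithmic inequality handles both parts simultaneously.
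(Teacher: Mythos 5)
Your proof is correct and follows essentially the same approach as the paper: both reduce to the submultiplicativity of $u\mapsto\log(3+u)$ via the algebraic inequality $3+uv\le(3+u)(3+v)$ and then convert the additive bound to a multiplicative one using the fact that $\log(3+u)\ge\log 3>1$. Your remark that the $\Psi$ case collapses to the \emph{same} logarithmic inequality after the substitution $u=s^{\alpha/n}$, $v=t^{\alpha/n}$ is a tidy way of packaging what the paper carries out in one line.
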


\begin{proof}
We calculate
\begin{align*}
\log(3+st)
\le
\log(3+s)(3+t)
=
\log(3+s)+\log(3+t)
\le
2\log(3+s)\log(3+t).
\end{align*}
Then
\begin{align*}
\Phi(st)
\le
st\cdot2\log(3+s)\log(3+t)
=
2\Phi(s)\Phi(t),
\end{align*}
and
\begin{align*}
\Psi(st)
\le
\left(st\log\left(3+s^{\frac\alpha n}t^{\frac\alpha n}\right)\right)^{\frac n{n-\alpha}}
\le
\left(
st
\cdot2
\log\left(3+s^{\frac\alpha n}\right)
\log\left(3+t^{\frac\alpha n}\right)
\right)^{\frac n{n-\alpha}}
=
2^{\frac n{n-\alpha}}\Psi(s)\Psi(t),
\end{align*}
as desired.
\end{proof}

\section{Proof of Lemma \ref{lem:elemental}}\label{App:elemental}

\begin{itemize}
\item[(1)] We prove the pointwise estimate
\[
\Psi\circ\Phi(t)
\lesssim
\left(t\log^2(3+t)\right)^{\frac n{n-\alpha}},
\quad t>0.
\]
Note that
\begin{align*}
\log\left(3+\Phi(t)^{\frac\alpha n}\right)
&\le
\log\left(3+[(3+t)\log(3+t)]^{\frac\alpha n}\right)
\le
\log\left(3+(3+t)^2\right)\\
&\le
3\log(3+t).
\end{align*}
Therefore,
\begin{align*}
\Psi\circ\Phi(t)
=
\left(\Phi(t)\log\left(3+\Phi(t)^{\frac\alpha n}\right)\right)^{\frac n{n-\alpha}}
\lesssim
\left(t\log^2(3+t)\right)^{\frac n{n-\alpha}}.
\end{align*}

\item[(2)] Note that
\begin{align*}
\Psi_0\left(t^2\right)
=
\left(\frac t{\log(3+1/t^2)}\right)^{\frac{2n}{n-\alpha}}
\ge
\left(\frac t{3+1/t^2}\right)^{\frac{2n}{n-\alpha}}
\gtrsim
\min\left(t,t^3\right)^{\frac{2n}{n-\alpha}},
\end{align*}
and then
\[
\Psi_0(t)
\gtrsim
\min(t,t^3)^{\frac n{n-\alpha}}.
\]
Then, taking as a continuous function
\[
\varphi_1(t)
\equiv
\begin{cases}
t^3, & t\le\dfrac1{\sqrt{3}}, \\
t-\dfrac2{3\sqrt{3}}, & t>\dfrac1{\sqrt{3}},
\end{cases}
\]
we have
\[
\Psi_0(t)
\gtrsim
\varphi_1(t)^{\frac n{n-\alpha}}.
\]
Consequently, we may define
\[
\Psi_1(t)
\equiv
\varphi_1(t)^{\frac n{n-\alpha}}
=
\begin{cases}
t^{\frac{3n}{n-\alpha}}, & t\le\dfrac1{\sqrt{3}}, \\
\left(t-\dfrac2{3\sqrt{3}}\right)^{\frac n{n-\alpha}}, & t>\dfrac1{\sqrt{3}},
\end{cases}
\]
and obtain $\Psi_1\in\nabla_2$.
\end{itemize}

\section{Proof of Theorem \ref{thm:sharp-pointwise}}\label{app:sharp-pointwise-proof}

In this appendix, we give the proof of Theorem \ref{thm:sharp-pointwise}.

Fix $Q\in{\mathcal Q}({\mathbb R}^n)$ and $x\in Q$, and decompose
\[
f=f\chi_{2Q}+f\chi_{{\mathbb R}^n\setminus2Q}\equiv f_1+f_2.
\]
Setting
\[
c_Q\equiv I_\alpha(|b-b_{2Q}|f_2)(x),
\quad
F_1(y)\equiv|b(y)-b_{2Q}|I_\alpha f(y),
\quad
F_2(y)\equiv I_\alpha(|b-b_{2Q}|f_1)(y),
\]
\[
F_3(y)\equiv|I_\alpha(|b-b_{2Q}|f_2)(y)-c_Q|
\]
for $y\in Q$, we can split
\[
||b,I_\alpha|f(y)-c_Q|
\le
F_1(y)+F_2(y)+F_3(y).
\]
Here, since $1/p-\alpha/n>0$, we claim that $c_Q$ is convergent.
In fact,
\begin{align*}
c_Q
&=
\int_{{\mathbb R}^n\setminus2Q}\frac{|b(z)-b_{2Q}|}{|x-z|^{n-\alpha}}f(z)\,{\rm d}z
\lesssim
\sum_{j=1}^\infty\frac1{(2^j\ell(Q))^{n-\alpha}}
\int_{2^{j+1}Q\setminus2^jQ}|b(z)-b_{2Q}|f(z)\,{\rm d}z\\
&\lesssim
\sum_{j=1}^\infty(2^j\ell(Q))^\alpha\|b-b_{2Q}\|_{\exp(L),2^jQ}\|f\|_{L\log L,2^jQ}
\lesssim
\sum_{j=1}^\infty|2^jQ|^{\frac\alpha n-\frac1p}
\|b\|_{\rm BMO}\|f\|_{{\mathcal M}^p_{L\log L}}\\
&\sim
|Q|^{\frac\alpha n-\frac1p}
\|b\|_{\rm BMO}\|f\|_{{\mathcal M}^p_{L\log L}}.
\end{align*}

At first, we estimate for $F_1(y)$.
By H\"older's inequality and Lemma \ref{lem:bmo} (1),
\begin{align*}
\frac1{|Q|}\int_Q|F_1(y)|\,{\rm d}y
&\le
\left(\frac1{|Q|}\int_Q|b(y)-b_{2Q}|^{\eta'}\,{\rm d}y\right)^{\frac1{\eta'}}
\left(\frac1{|Q|}\int_QI_\alpha f(y)^\eta\,{\rm d}y\right)^{\frac1\eta}\\
&\lesssim
\|b\|_{\rm BMO}M((I_\alpha f)^\eta)(x)^{\frac1\eta}.
\end{align*}

Next, we estimate for $F_2(y)$.
By Proposition \ref{prop:normable-wLp} and the weak-type $(1,n/(n-\alpha))$-boundedness of $I_\alpha$,
\begin{align*}
\frac1{|Q|}\int_Q|F_2(y)|\,{\rm d}y
&\lesssim
\frac1{|Q|^{\frac{n-\alpha}n}}\|I_\alpha(|b-b_{2Q}|f_1)\|_{{\rm W}L^{\frac n{n-\alpha}}}
\lesssim
\frac{\ell(Q)^\alpha}{|Q|}\||b-b_{2Q}|f_1\|_{L^1}\\
&\lesssim
\ell(Q)^\alpha\|b-b_{2Q}\|_{\exp(L),2Q}\|f\|_{L\log L,2Q},
\end{align*}
where in the third inequality we used the $L\log L$-$\exp(L)$ duality for a probability measure.
According to Lemma \ref{lem:bmo} (2), we obtain
\[
\frac1{|Q|}\int_Q|F_2(y)|\,{\rm d}y
\lesssim
\|b\|_{\rm BMO}M_{\alpha,L\log L}f(x).
\]

Finally, we estimate for $F_3(y)$.
Let $y\in Q$.
We estimate
\begin{align*}
|F_3(y)|
&\le
\int_{{\mathbb R}^n\setminus2Q}
\left|\frac1{|y-z|^{n-\alpha}}-\frac1{|x-z|^{n-\alpha}}\right||b(z)-b_{2Q}|f(z)
\,{\rm d}z\\
&\lesssim
\int_{{\mathbb R}^n\setminus2Q}
\frac{|x-y|}{|x-z|^{n-\alpha+1}}|b(z)-b_{2Q}|f(z)
\,{\rm d}z\\
&\lesssim
\sum_{j=1}^\infty\frac{\ell(Q)}{\ell(2^jQ)^{n-\alpha+1}}
\int_{2^{j+1}Q\setminus2^jQ}|b(z)-b_{2Q}|f(z)\,{\rm d}z\\
&\lesssim
\sum_{j=1}^\infty\frac{\ell(2^jQ)^\alpha}{2^{j(n+1)}}
\|b-b_{2Q}\|_{\exp(L),2^{j+1}Q}\|f\|_{L\log L,2^{j+1}Q},
\end{align*}
where in the last inequality we used the $L\log L$-$\exp(L)$ duality for a probability measure.
Using Lemma \ref{lem:bmo} (3), we have
\begin{align*}
|F_3(y)|
\lesssim
\sum_{j=1}^\infty\frac j{2^{j(n+1)}}
\|b\|_{\rm BMO}M_{\alpha,L\log L}f(x)
\lesssim
\|b\|_{\rm BMO}M_{\alpha,L\log L}f(x).
\end{align*}

Consequently, combining these estimates for $F_1(y)$, $F_2(y)$ and $F_3(y)$, we finish the proof of Theorem \ref{thm:sharp-pointwise}.

Finally, remark that as is mentioned in the proof of Theorem 1.3 in \cite{CrFi03}, Theorem \ref{thm:sharp-pointwise} can be improved as follows:

\begin{theorem}
Let $0<\alpha<n$ and $1<p<\infty$.
If
\[
b\in{\rm BMO}({\mathbb R}^n),
\quad
\frac1p-\frac\alpha n>0,
\]
then
\[
(|b,I_\alpha|f)^\sharp(x)
\lesssim
\|b\|_{\rm BMO}
\left(
I_\alpha f(x)+M_{\alpha,L\log L}f(x)
\right),
\quad x\in{\mathbb R}^n
\]
for all nonnegative functions $f\in{\mathcal M}^p_{L\log L}({\mathbb R}^n)$.
\end{theorem}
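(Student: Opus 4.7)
The strategy is to retain the entire structure of the proof of Theorem \ref{thm:sharp-pointwise} and only refine the treatment of the first term $F_1(y)=|b(y)-b_{2Q}|I_\alpha f(y)$. The estimates obtained there for $F_2$ and $F_3$ are already of the form $\|b\|_{\rm BMO}M_{\alpha,L\log L}f(x)$, and the choice and convergence of the centering constant $c_Q$ are unaffected by how $F_1$ is handled; the task therefore reduces to producing a sharper bound on the average of $F_1$ over $Q$, in which $M((I_\alpha f)^\eta)(x)^{1/\eta}$ is replaced by the cleaner $I_\alpha f(x)$.

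I would further split $I_\alpha f=I_\alpha f_1+I_\alpha f_2$ with $f_1=f\chi_{2Q}$ and $f_2=f\chi_{{\mathbb R}^n\setminus 2Q}$. For the nonlocal piece, one checks by the triangle inequality that whenever $y\in Q$ and $z\notin 2Q$ we have $|y-z|\sim|x-z|$; hence $I_\alpha f_2(y)\lesssim I_\alpha f_2(x)\le I_\alpha f(x)$ uniformly on $Q$. Applying Lemma \ref{lem:bmo}(1) then gives
\[
\frac1{|Q|}\int_Q|b(y)-b_{2Q}|\,I_\alpha f_2(y)\,{\rm d}y
\lesssim I_\alpha f(x)\,\|b\|_{\rm BMO},
\]
which produces the first summand of the claimed pointwise bound.

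For the local piece $I_\alpha f_1$, I would combine Kolmogorov's inequality (a direct consequence of Proposition \ref{prop:normable-wLp}) with the classical weak-type $(1,n/(n-\alpha))$ boundedness of $I_\alpha$. Picking any exponent $r\in(1,n/(n-\alpha))$, for $x\in Q$,
\[
\left(\frac1{|Q|}\int_Q(I_\alpha f_1(y))^r\,{\rm d}y\right)^{1/r}
\lesssim|Q|^{-(n-\alpha)/n}\|f_1\|_{L^1}
\sim\ell(Q)^\alpha|f|_{2Q}
\lesssim M_{\alpha,L\log L}f(x),
\]
the last step using the trivial embedding $L^1\hookrightarrow L\log L$ on the probability measure $|2Q|^{-1}\,{\rm d}y$. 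Hölder's inequality with conjugate exponent $r'$ applied to $|b-b_{2Q}|$, together with Lemma \ref{lem:bmo}(1), then yields
\[
\frac1{|Q|}\int_Q|b(y)-b_{2Q}|\,I_\alpha f_1(y)\,{\rm d}y
\lesssim\|b\|_{\rm BMO}\,M_{\alpha,L\log L}f(x),
\]
which supplies the second summand.

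The main obstacle, and indeed the content of the improvement over Theorem \ref{thm:sharp-pointwise}, is the replacement of the Hölder step on the full $I_\alpha f$ by this near-far decomposition. It succeeds precisely because $n/(n-\alpha)>1$ permits an exponent $r>1$ in Kolmogorov's inequality for the local piece, while the near-constancy of the fractional kernel away from $2Q$ allows the nonlocal piece to be pulled out of the averaging on $Q$. Summing these two contributions with the unchanged estimates for $F_2$ and $F_3$ concludes the argument.
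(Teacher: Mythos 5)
Your proof is correct; the paper states this improved theorem without proof, referring the reader to the proof of Theorem 1.3 in Cruz-Uribe and Fiorenza \cite{CrFi03}, and your near-far splitting of $I_\alpha f$ (Kolmogorov's inequality via Proposition \ref{prop:normable-wLp} on the local piece $I_\alpha f_1$, and kernel comparability $|y-z|\sim|x-z|$ together with the nonnegativity of $f$ on the far piece $I_\alpha f_2$) is exactly the standard device used there to upgrade $M((I_\alpha f)^\eta)(x)^{1/\eta}$ to $I_\alpha f(x)$. The treatment of $F_2$, $F_3$ and the centering constant $c_Q$ is indeed unaffected, so nothing else needs to change.
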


{\bf Acknowledgements.} 
The authors are thankful to Professor Yoshihiro Sawano, Professor Toru Nogayama and Dr.\ Kazuki Kobayashi for their careful reading the paper and giving very helpful comments and advice.
The author (N.H.) was supported by Grant-in-Aid for Research Activity Start-up Grant Number 23K19013.

\end{document}